\documentclass[reqno, 11pt]{article}

\usepackage{enumerate}
\usepackage{needspace}
\usepackage{latexsym}
\usepackage{amsmath}
\usepackage{amsfonts}
\usepackage{amsthm}
\usepackage{amssymb,mathtools}
\usepackage{newlfont}
\usepackage{graphics}
\usepackage{color}
\usepackage{float}
\usepackage{diagbox}
\usepackage{tocloft}
\usepackage{titlesec}
\usepackage{booktabs,array,longtable,enumitem,microtype}
\usepackage{extpfeil}
\usepackage{centernot}

\usepackage[linesnumbered,ruled,vlined]{algorithm2e}
\usepackage{url}

\usepackage[margin=1in]{geometry}

\usepackage[T1]{fontenc}
\usepackage{lmodern}
\usepackage{microtype}
\usepackage{rotating}
\usepackage{multirow}
\usepackage{extarrows}
\usepackage[sort,compress,numbers]{natbib}
\usepackage[utf8]{inputenc}
\usepackage{xcolor}
\usepackage{listings}
\usepackage{aliascnt}
\usepackage[pagebackref,colorlinks=true,linkcolor=blue,citecolor=red,urlcolor=blue]{hyperref}
\usepackage[nameinlink,noabbrev,capitalize]{cleveref}
\numberwithin{equation}{section}

\setlist[enumerate]{label=\textup{(\roman*)},leftmargin=*}
\newtheorem{theorem}{Theorem}[section]
\newaliascnt{proposition}{theorem}
\newtheorem{proposition}[proposition]{Proposition}
\aliascntresetthe{proposition}
\newaliascnt{lemma}{theorem}
\newtheorem{lemma}[lemma]{Lemma}
\aliascntresetthe{lemma}
\newaliascnt{corollary}{theorem}

\aliascntresetthe{corollary}

\theoremstyle{definition}
\newaliascnt{definition}{theorem}
\newtheorem{definition}[definition]{Definition}
\aliascntresetthe{definition}
\newaliascnt{remark}{theorem}
\newtheorem{remark}[remark]{Remark}
\aliascntresetthe{remark}
\newaliascnt{example}{theorem}

\aliascntresetthe{example}
\newaliascnt{conjecture}{theorem}

\aliascntresetthe{conjecture}
\newaliascnt{question}{theorem}
\newtheorem{question}[question]{Question}
\aliascntresetthe{question}
\newaliascnt{problem}{theorem}

\aliascntresetthe{problem}
\newaliascnt{alg}{theorem}

\aliascntresetthe{alg}

\allowdisplaybreaks[4]

\SetKwInput{KwInput}{Input}                
\SetKwInput{KwOutput}{Output}              

\newcommand{\Z}{\mathbb Z}
\newcommand{\R}{\mathbb R}

\DeclareMathOperator{\aff}{aff}
\DeclareMathOperator{\conv}{conv}
\DeclareMathOperator{\spanR}{span_{\mathbb R}}
\DeclareMathOperator{\Vol}{Vol}
\newcommand{\coeff}[2]{[#1^{#2}]}

\title{Ehrhart positivity of smooth polytopes with few facets}

\author{Feihu Liu$^{\color{blue} \dag}$, Jinlong Tang$^{\color{blue} \ddag}$, Sihao Tao$^{\color{blue} \P}$, and Zihao Zhang$^{\color{blue} \S}$
\\[2mm]
{\small $^{\color{blue} \dag}$ Center for Combinatorics, LPMC,}\\[-0.8ex]
{\small Nankai University, Tianjin 300071, P.R.~China}\\
{\small $^{\color{blue} \ddag, \P}$ School of Mathematical Sciences,}\\[-0.8ex]
{\small Capital Normal University, Beijing, 100048, P.R.~China}\\
{\small $^{\color{blue} \S}$ School of Mathematics and Statistics,}\\[-0.8ex]
{\small Beijing Institute of Technology, Beijing 102400, P.R.~China}\\
{\small {\color{blue} $^\dag$} Email address: \url{liufeihu7476@163.com}}\\
{\small {\color{blue} $^\ddag$} Email address: \url{jinlong\_tang@cnu.edu.cn}}\\
{\small {\color{blue} $^\P$} Email address: \url{sihao\_tao@cnu.edu.cn}}\\
{\small {\color{blue} $^\S$} Email address: \url{zihao-zhang@foxmail.com}}\\
}

\date{\today}

\begin{document}

\maketitle

\begin{abstract}
For each dimension $d$, we determine the minimum number of facets of a smooth $d$-dimensional lattice polytope whose Ehrhart polynomial has at least one negative coefficient. This minimum is six in dimension $d=3$, seven in dimension $d=4$, and $d+2$ in every dimension $d\ge 5$. The lower bounds follow from the classification of smooth lattice polytopes with at most $d+2$ facets, together with explicit formulas for Ehrhart polynomials in dimensions at most $4$. We also prove that a lattice polytope of dimension at least $3$ with a smooth vertex cannot be $2$-integral with respect to any ordered lattice basis.
\end{abstract}

\noindent
\begin{small}
\emph{2020 Mathematics subject classification}: Primary 52B20; Secondary 52B05, 05A15.
\end{small}

\noindent
\begin{small}
\emph{Keywords}: Ehrhart polynomial; Ehrhart positivity; Smooth lattice polytope; Facet; Higher integrality.
\end{small}


\section{Introduction}

A \emph{lattice polytope} is the convex hull of finitely many points of $\Z^d$. Throughout this paper, a $d$-dimensional lattice polytope is assumed to be full-dimensional in $\R^d$. Its \emph{lattice-point counting function} is
\[
 L_P(t)=|tP\cap\Z^d|=\sum_{j=0}^{d}e_j(P)t^j,\qquad t\in \mathbb{N}.
\]
Ehrhart~\cite{Ehrhart,BR2015} proved that the function $L_P(t)$ is a polynomial in $t$ of degree $d$.
This polynomial is called the \emph{Ehrhart polynomial} of $P$.
The leading coefficient is the volume of $P$, the coefficient of $t^{d-1}$ is half its lattice boundary volume, and the constant coefficient is one. Our volume convention assigns volume one to a fundamental parallelepiped of the relevant lattice. In particular, the volume of a unimodular $j$-simplex is $1/j!$.

A polytope is \emph{Ehrhart positive} if $e_j(P)>0$ for every $0\le j\le d$.
Ehrhart positivity has attracted considerable attention in recent years. 
We refer the reader to \cite{Liu2019} for a survey of this topic.

A nonzero vector of $\Z^d$ is \emph{primitive} if it is not an integer multiple $m u$ of another lattice vector with $m\ge2$. A vertex $v$ of a lattice $d$-polytope is \emph{smooth} if exactly $d$ edges meet at $v$ and their primitive directions form a basis of $\Z^d$. A polytope is \emph{smooth} if all its vertices are smooth. This is also called the \emph{Delzant condition}. Smooth polytopes occur as moment polytopes of nonsingular projective toric varieties; background on this correspondence is available in \cite{CLS2011}.

Castillo, Liu, Nill, and Paffenholz \cite[Theorem~1.2]{CLNP2018} proved that in every dimension $d\ge3$ a smooth lattice polytope can have negative coefficients in all degrees $1,\ldots,d-2$. Their examples arise from repeated vertex truncations followed by products. In \cite[Example~1.5]{CLNP2018}, this given a smooth reflexive polytope of dimension $9$ with $12$ facets and linear Ehrhart coefficient $-6673/630$. Thus the assertion that every smooth $d$-polytope with at most $d+3$ facets is Ehrhart positive already fails. 

This example partially answers the following open problem.
\begin{question}[\cite{AimPl-Website}]\label{Question111}
Are $d$-dimensional smooth polytopes with $\leq d+3$ facets Ehrhart positive or satisfy the higher integrality condition?
\end{question}

The restriction on the number of facets nevertheless suggests a more precise question: 

\begin{question}
How few facets can a smooth polytope with a negative Ehrhart coefficient have?
\end{question}

For $d\ge1$, define
\[
 m_d=\min\{f_{d-1}(P):P\text{ is a smooth lattice }d\text{-polytope and } e_j(P)<0\text{ for some }1\le j\le d\},
\]
where $f_{d-1}(P)$ denotes the number of facets and the minimum of the empty set is $+\infty$.

\begin{theorem}\label{thm:minimum}
The minimum facet numbers are
\[ m_d= \begin{cases}
 +\infty,&d=1,2,\\
 d+3,&d=3,4,\\
 d+2,&d\ge5.
 \end{cases}
\]
More precisely, every smooth $d$-polytope with $d+1$ facets is Ehrhart positive, and every smooth $d$-polytope with $d+2$ facets is Ehrhart positive when $d\le4$. Counterexamples with exactly $d+2$ facets exist in every dimension $d\ge5$.
\end{theorem}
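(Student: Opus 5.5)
The proof splits into four pieces: small dimensions, the $d+1$ case, the $d+2$ case for $d\le4$, and explicit counterexamples for $d\ge5$.

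\emph{Dimensions $d\le 2$.} For $d=1,2$ I use the fact that every lattice polygon is Ehrhart positive. By Pick's theorem $L_P(t)=\Vol(P)t^2+\tfrac12|\partial P\cap\Z^2|\,t+1$, and all three coefficients are positive. So $m_1=m_2=+\infty$. For every $d$, the coefficients $e_d$ (the volume), $e_{d-1}$ (half the boundary volume) and $e_0=1$ are always positive. This leaves only $e_1,\dots,e_{d-2}$ to control.

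\emph{Smooth polytopes with $d+1$ and $d+2$ facets.} A smooth $d$-polytope with $d+1$ facets is a simple polytope with $d+1$ facets, hence a simplex. Smoothness at one vertex forces it to be unimodularly equivalent to a dilate $a\Delta_d$ of the standard simplex. Then $L_P(t)=\binom{at+d}{d}$, whose roots $-k/a$ ($k=1,\dots,d$) are all negative, so every coefficient is positive.

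With $d+2$ facets, a simple polytope is combinatorially $\Delta_k\times\Delta_{d-k}$. By Kleinschmidt's classification, a smooth one is a ``Cayley-type'' polytope: a $\Delta_k$-bundle over $\Delta_{d-k}$. It is determined by nonnegative integers $b_1\le\dots\le b_k$, a base dilation $a\ge1$, and a fibre dilation $c\ge1$. I expect this to be the classification the abstract refers to. I would write such a polytope explicitly as
\[
P=\{(x,y)\in\R^{d-k}\times\R^{k}: x\ge0,\ \textstyle\sum x_i\le a,\ y\ge 0,\ \textstyle\sum_j y_j\le c+\sum_j b_j\cdots\}
\]
or, more symmetrically, as the set of $(x,y)$ with $y\in(c+\ell(x))\Delta_k$-type fibres. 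Here $\ell$ is an affine function built from the $b_j$.

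Then $L_P(t)=\sum_{x\in ta\Delta_{d-k}\cap\Z}\#(\text{fibre lattice points})$. This is a sum of products of binomial coefficients, and it gives a closed formula for $e_j(P)$ as a polynomial in $a,c,b_1,\dots,b_k$. For $d=3,4$, I would enumerate the cases $k=1,\dots,d-1$. In each, I would show that the middle coefficients $e_1,e_2$ (for $d=4$, also the two coefficients $e_1$ and $e_2$ in degree $4$) are polynomials in $a,c,b_j$ with nonnegative coefficients after a suitable substitution, e.g.\ $a=1+a'$, $c=1+c'$. This step is the main computational obstacle. Positivity is not termwise in general, so for $d=4$ I would first try expanding in $a',c'$, $b_j$ and checking sign patterns. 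Failing that, I would split into the cases $b=0$ (products of simplices, where positivity follows from positivity for products) and $b\ge1$.

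\emph{Counterexamples for $d\ge5$ and the sharp cases $d=3,4$.} For $d\ge5$, I would use the same $d+2$-facet formula with $k=1$ (a $\Delta_1$-bundle over $\Delta_{d-1}$, i.e.\ a smooth Cayley polytope, or equivalently a truncated-simplex shape). I would choose $a=c=1$ and a large twist $b$, and show that some middle coefficient, likely $e_1$ or $e_{d-2}$, behaves like $-\mathrm{const}\cdot b^{m}$ plus lower-order terms. This would be checked first at $d=5$ by direct expansion. I would then extend to all $d\ge5$, either by the same family with $d$ as a parameter or by taking products with unimodular segments. The product route does not change the facet excess: $P\times[0,1]$ has $d+3$ facets in dimension $d+1$, as required. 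However, product positivity might fix the sign, so the general-$d$ family is preferable.

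For $d=3,4$, $m_d=d+3$ needs an explicit smooth polytope with $d+3$ facets and a negative coefficient. For $d=3$, I would take a smooth $3$-polytope with $6$ facets obtained by a vertex truncation of a twisted $\Delta_1$-bundle with large twist, in the spirit of \cite{CLNP2018}, and verify $e_1<0$ by computing $L_P$ at $t=1,2,3$. For $d=4$, I would do the same with a $7$-facet example.
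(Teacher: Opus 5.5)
\emph{Sharp examples for $d=3,4$.} Obtaining these by one vertex truncation of a smooth $(d+2)$-facet polytope cannot work: every such truncation is Ehrhart positive. To stay smooth, the cut must be $\lambda_1+\cdots+\lambda_d=k$ in the coordinates $\lambda$ of the edge basis at the vertex $v$. Indeed, if the new normal is $\sum_i\alpha_in_i$ in terms of the facet normals at $v$, the new vertex on the $i$-th edge is smooth only when $\alpha_i=1$. Moreover, $k$ must be smaller than every edge length at $v$. In dimension three this gives $L_{P'}(t)=L_P(t)-\binom{kt+2}{3}$, so $e_1(P')=e_1(P)-k/3$. In the notation of \eqref{eq:normalform}, every vertex lies on an edge of lattice length $b$, so $k<b$. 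On the other hand, $e_1(P)=a+\frac{3b}{2}+\frac{b(c_1+c_2)}{3}$ for $(p,q)=(1,2)$ and $e_1(P)=\frac{3a}{2}+b+\frac{3bc_1}{4}+\frac{bc_1^2}{12}$ for $(p,q)=(2,1)$. Hence $e_1(P')>0$. In dimension four the corner removes $\binom{kt+3}{4}$, which lowers $e_1$ by $k/4$ and $e_2$ by $11k^2/24$. The same check on the three types $(p,q)$ shows these truncations stay positive. You need a different combinatorial type. The paper uses the Cayley sum $R_{1,c}$ of $[0,1]\times[0,c+1]$ and $[0,c+1]\times[0,1]$, a combinatorial cube in which one side grows while the other shrinks. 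Its slice count splits as $(t+1)((c+1)t+1)+c^2j(t-j)$. Since $\sum_{j=0}^{t}j(t-j)=\frac{t^3-t}{6}$, this gives $e_1=-\frac{c^2}{6}+c+3<0$ for $c\ge9$. In dimension four the paper uses $R_{2,12}$.

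\emph{Examples for $d\ge5$.} In your own coordinates, $k=1$ means an interval fibre over a $(d-1)$-simplex. This is the case $p=1$, $q=d-1$ of \eqref{eq:normalform}. Its count factors as $\binom{bt+d-1}{d-1}\bigl(1+(a+\frac{b(c_1+\cdots+c_{d-1})}{d})t\bigr)$, so it is Ehrhart positive for every twist. You have swapped the toric fibre and the polytope slice: the $\mathbb{P}^1$-bundle over $\mathbb{P}^{d-1}$ has $(d-1)$-simplex slices over a segment. For the same reason, Kleinschmidt's form has one twist per base coordinate of the polytope, not one per fibre coordinate. The right family is $\conv\bigl(\Delta_{d-1}\times\{0\}\cup(1+c)\Delta_{d-1}\times\{1\}\bigr)$, with $L(t)=\sum_{j=0}^{t}\binom{t+cj+d-1}{d-1}$. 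The idea missing from your plan is why this goes negative. Its top part in $c$ is $\frac{c^{d-1}}{(d-1)!}S_{d-1}(t)$, and $B_4=-\frac1{30}$ forces $[t^{m-3}]S_m(t)=-\frac{m(m-1)(m-2)}{720}$. So the coefficient that becomes negative is $e_{d-4}$. Neither of your candidates works uniformly. The leading term of $e_{d-2}$ is $+\frac{c^{d-1}}{12(d-2)!}$, coming from $B_2$. The leading term of $e_1$ is $B_{d-1}c^{d-1}/(d-1)!$, which vanishes for even $d$ and is positive for $d=7$. Only at $d=5$ does $e_1=e_{d-4}$. The product fallback is also invalid: $P\times[0,1]$ has $(d+2)+2=d+4$ facets, not $d+3$, so it raises the facet excess to three.

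\emph{The positive half.} For $d+2$ facets and $d\le4$ you only describe a computation to attempt. The missing structural point is that the summand $\binom{at+c_1y_1+\cdots+c_qy_q+p}{p}$ is a polynomial in $t$ and $y$ with nonnegative coefficients. For $p=1$ the count factors as above. For $q=1$ only $S_0,\dots,S_3$ enter, and these have nonnegative coefficients. For $p=q=2$ only first and second moments of weak compositions enter, and the negative part of $\sum y_1y_2$ is absorbed because $c_1^2-c_1c_2+c_2^2\ge0$. No substitution is needed, and this explains the threshold: $S_4$ is the first power sum with a negative coefficient. Also, smoothness at a single vertex does not force $a\Delta_d$; for example, $\conv(0,e_1,2e_2)$ is smooth at the origin. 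You need the other vertices to make the $a_i$ equal.
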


The lower bounds in \cref{thm:minimum} use the classification of smooth polytopes with few facets. The case of $d+2$ facets is due to Kleinschmidt \cite{Kleinschmidt1988}; the corresponding polytopes are described explicitly in \cite[\S4.1]{HNS2022}. The proof of positivity in dimensions at most four reduces to the first two moments of weak compositions and the power sums of degrees at most three. The upper bounds follow from two families whose slices are simplices or products of a simplex and an interval. 

\cref{Question111} concerns higher integrality in the sense of Liu \cite{Liu2011}. After an ordered basis of $\Z^d$ has been chosen, let $\pi_j:\R^d\longrightarrow\R^j$ be projection onto the first $j$ coordinates. A lattice polytope is \emph{$k$-integral} if
\[
 \pi_j\bigl(\aff(F)\cap\Z^d\bigr)=\Z^j
\]
for every $j$-dimensional face $F$ and every $0\le j\le k$. For $d\ge2$, Liu's coefficient theorem implies that $(d-2)$-integrality is sufficient for Ehrhart positivity. This condition concerns a common ordered lattice basis for all faces; ordinary lattice integrality alone does not imply it.
We also obtain the following result.
\begin{theorem}\label{thm:obstruction}
A lattice polytope of dimension $d\ge3$ with a smooth vertex is not $2$-integral with respect to any ordered lattice basis. Consequently, a smooth $d$-polytope with $d\ge4$ is not $(d-2)$-integral with respect to any ordered lattice basis, regardless of its number of facets.
\end{theorem}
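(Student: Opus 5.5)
The idea is to turn $2$-integrality into arithmetic conditions on the primitive edge directions at the smooth vertex, and then show that three such directions already give a contradiction by a parity argument. Fix an ordered lattice basis, so that we may work in coordinates with $\pi_j$ the projection onto the first $j$ coordinates. Let $v$ be a smooth vertex of the lattice $d$-polytope $P$, with $d\ge3$. Let $u_1,\dots,u_d$ be the primitive directions of the $d$ edges at $v$; by smoothness they form a basis of $\Z^d$. Since $v$ is simple, every pair of edges at $v$ spans a $2$-face of $P$. Concretely, the face of the tangent cone at $v$ spanned by $u_i,u_j$ corresponds to a $2$-face $F_{ij}$ of $P$ containing $v$ with $\aff(F_{ij})=v+\spanR(u_i,u_j)$.

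First I translate the integrality conditions on these faces into conditions on the $u_i$.

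For an edge $E_i$ at $v$, primitivity of $u_i$ gives $\aff(E_i)\cap\Z^d=v+\Z u_i$. Since $v\in\Z^d$, we get $\pi_1(\aff(E_i)\cap\Z^d)=\pi_1(v)+(u_i)_1\Z$. This equals $\Z$ exactly when the first coordinate satisfies $(u_i)_1=\pm1$.

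For the $2$-face $F_{ij}$, the vectors $u_i,u_j$ are part of a lattice basis. Hence
\[
\spanR(u_i,u_j)\cap\Z^d=\Z u_i+\Z u_j,
\]
and so $\aff(F_{ij})\cap\Z^d=v+\Z u_i+\Z u_j$. Its image under $\pi_2$ is $\pi_2(v)$ plus the sublattice of $\Z^2$ generated by $\pi_2(u_i)$ and $\pi_2(u_j)$. This is all of $\Z^2$ if and only if
\[
\det\bigl(\pi_2(u_i),\pi_2(u_j)\bigr)=\pm1 .
\]

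Now assume, for contradiction, that $P$ is $2$-integral, and use three edges $u_1,u_2,u_3$, which exist because $d\ge3$. Replacing $u_i$ by $-u_i$ changes the relevant determinants only by a sign. So by the edge condition we may assume $(u_i)_1=1$ and write $\pi_2(u_i)=(1,a_i)$ with $a_i\in\Z$. The determinant condition then reads $a_j-a_i=\pm1$ for all $1\le i<j\le3$. This is impossible: among the three integers $a_1,a_2,a_3$, two have the same parity, and their difference is even. This contradiction proves the first assertion.

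For the consequence, note that $k$-integrality implies $k'$-integrality for every $k'\le k$, directly from the definition. If $d\ge4$ then $d-2\ge2$, so a $(d-2)$-integral smooth $d$-polytope would be $2$-integral, contradicting the first part. No step here is a real obstacle. The only point needing care is the claim that each pair of edges at a simple vertex spans a genuine $2$-face whose affine lattice is exactly $v+\Z u_i+\Z u_j$; this is standard for simple vertices and follows from the unimodularity of $\{u_1,\dots,u_d\}$.
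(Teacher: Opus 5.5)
Your proof is correct and follows essentially the same route as the paper: the edge condition forces the first coordinate of each $u_i$ to be $\pm1$, and the $2$-faces spanned by pairs of edges at the smooth vertex (with affine lattice $v+\Z u_i+\Z u_j$) force the $2\times 2$ determinants to be $\pm1$. The parity contradiction among three edges then finishes the argument. Your sign normalization $(u_i)_1=1$, which turns the determinant into $a_j-a_i$, is just a cosmetic variant of the paper's reduction modulo two.
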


The paper is organized as follows. 
\cref{sec:preliminaries} records the enumerative results used in the proofs. 
\cref{sec:positive} proves positivity for smooth polytopes with at most $d+2$ facets in dimensions at most $4$. \cref{sec:opposing,sec:ranktwo-negative} construct the counterexamples and complete the proof of \cref{thm:minimum}. \cref{sec:integrality} proves \cref{thm:obstruction} and discusses its relation to Liu's condition.

\section{Preliminaries}\label{sec:preliminaries}

An \emph{affine unimodular transformation} of $\R^d$ is a map $x\longmapsto Ux+a$, where $U$ is an integer matrix with determinant $1$ or $-1$ and $a\in\Z^d$. Such a map preserves smoothness, facet numbers, and Ehrhart polynomials. Indeed, for every nonnegative integer $t$, the map $x\longmapsto Ux+ta$ is a bijection between the lattice points of $tP$ and those of $t(UP+a)$.

The standard lattice $p$-simplex is
\[
 \Delta_p=\{x\in\R^p:x_i\ge0\ (1\le i\le p),\ x_1+\cdots+x_p\le1\}.
\]
For every nonnegative integer $N$, its dilation contains
\[
 |N\Delta_p\cap\Z^p|=\binom{N+p}{p}
\]
lattice points. This is the standard weak-composition formula: the slack coordinate $N-x_1-\cdots-x_p$ gives a bijection with weak compositions of $N$ into $p+1$ parts; see \cite[Chapter~1]{Stanley2012}.

To check smoothness at a vertex, we shall list all edges through that vertex and calculate their primitive directions. If exactly $d$ edges meet there, arrange these directions as the columns of an integer $d\times d$ matrix. They form a basis of $\Z^d$ precisely when the determinant is $1$ or $-1$. Indeed, an integer matrix with either determinant has an integer inverse by the cofactor formula; conversely, a matrix and its integer inverse have integer determinants whose product is one. All coordinates below refer to the displayed standard lattices.

\subsection{Smooth polytopes with \texorpdfstring{$d+1$ or $d+2$}{d+1 or d+2} facets}

The classification of smooth polytopes with $d+2$ facets is due to Kleinschmidt \cite{Kleinschmidt1988}. Its explicit description as a convex hull of simplices is given in \cite[\S4.1]{HNS2022}. In the coordinates below, the last $q$ coordinates range over a dilated simplex, and fixing those coordinates leaves a dilated simplex in the first $p$ coordinates. This description will allow us to count lattice points by the weak-composition formula.

\begin{theorem}\label{thm:classification}
Let $P$ be a smooth lattice $d$-polytope.
\begin{enumerate}
\item If $P$ has $d+1$ facets, then $P$ is affinely unimodularly equivalent to $a\Delta_d$ for a positive integer $a$.
\item If $P$ has $d+2$ facets, then there are positive integers $p,q,a,b$ with $p+q=d$ and nonnegative integers $c_1,\ldots,c_q$ such that $P$ is affinely unimodularly equivalent to
\begin{equation}\label{eq:normalform}
 \left\{(x,y)\in\R^p\times\R^q:
 \begin{array}{l}
 y_j\ge0\ (1\le j\le q),\quad y_1+\cdots+y_q\le b,\\
 x_i\ge0\ (1\le i\le p),\quad
 x_1+\cdots+x_p\le a+c_1y_1+\cdots+c_qy_q
 \end{array}\right\}.
\end{equation}
\end{enumerate}
\end{theorem}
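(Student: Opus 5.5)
The plan is to deduce both parts from the combinatorial structure of simple polytopes with few facets, and then use smoothness at a single vertex to normalize coordinates.

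\emph{Part (i).} A smooth polytope is simple, and a simple $d$-polytope with $d+1$ facets is combinatorially a simplex. Choose a vertex $v$ and translate it to the origin. By smoothness, the primitive edge directions at $v$ form a lattice basis, so after an affine unimodular transformation they become $e_1,\dots,e_d$. Then the $d$ facets through $v$ are the coordinate hyperplanes $x_i=0$. The remaining vertices are $a_ie_i$ with $a_i$ positive integers. Now impose smoothness at the vertex $a_1e_1$. Its edge directions are $-e_1$ and the primitive vectors of $a_je_j-a_1e_1$ for $j\ne1$. The determinant condition forces $a_j=a_1$ for all $j$; for instance, the edge toward $a_je_j$ has primitive direction $(a_je_j-a_1e_1)/\gcd(a_1,a_j)$, and the unimodularity of these $d$ vectors gives $a_j/\gcd=1$. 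Hence $P\cong a\Delta_d$.

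\emph{Part (ii).} I would invoke the classical fact that a simple $d$-polytope with $d+2$ facets is combinatorially a product $\Delta_p\times\Delta_q$ with $p+q=d$. This follows from Gale duality: the Gale diagram lives in $\R^1$, and the two sign classes partition the facets into a block of size $p+1$ and a block of size $q+1$. In this product structure, a set of facets has empty intersection precisely when it contains a whole block. Choose a vertex $v$ lying on $p$ facets from the first block and $q$ from the second. Smoothness lets us take $v=0$, with the facet normals there equal to the standard coordinate functionals $x_1,\dots,x_p,y_1,\dots,y_q$. The two remaining facets then have inequalities $\sum_i\alpha_ix_i+\sum_j\beta_jy_j\le a$ (first block) and $\sum_i\gamma_ix_i+\sum_j\delta_jy_j\le b$ (second block).

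Next I would use smoothness at the other vertices to pin down these coefficients. Consider the vertices of the face $\{x=0\}$, which is a $q$-simplex cut out by the $y_j\ge0$ and the second-block facet. Smoothness at its vertices forces the second facet, restricted to that face, to be $\sum y_j\le b$, exactly as in (i). Because the second facet does not involve first-block coordinates combinatorially, it should have $\gamma=0$ after a shear. Similarly, smoothness at vertices of the face $\{y=0\}$ makes the first facet $\sum x_i\le a+\sum_j c_jy_j$ with $\alpha_i=1$. Finally, the requirement that the vertices $(x,y)$ with $y=be_j$ be smooth lattice vertices means that the fiber $a+bc_j$ is a positive integer at each such vertex. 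Integrality of the vertices gives $c_j\in\Z$, and the unimodular substitution $x\mapsto x$, $y\mapsto$ (a permutation of $y$ and the relabeling $b$-vertex to origin) allows one to choose $v$ so that all $c_j\ge0$: take $v$ at the vertex minimizing the first-block slack.

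The main obstacle is the normalization in part (ii): showing that the second-block facet can be made independent of $x$. I would handle this by writing the determinant of the edge directions at a vertex adjacent to $v$ along a $y$-edge and showing that it forces $\gamma_i=0$ once $\alpha_i=1$. A cleaner alternative is simply to cite Kleinschmidt~\cite{Kleinschmidt1988} and the explicit form in \cite[\S4.1]{HNS2022}, since the theorem is stated here as a recalled classification.
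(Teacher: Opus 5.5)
Your self-contained route for (ii) has a genuine gap at exactly the step you flag. Once $v=0$ and the facets through $v$ are the coordinate hyperplanes, any affine unimodular map preserving this set-up permutes the generators $e_k$ of the orthant. So no shear is available: $\beta$ and $\gamma$ are fixed by $P$, by $v$, and by which block you call ``first''. Your heuristic that the second facet ``does not involve first-block coordinates combinatorially'' is symmetric in the two blocks. It would equally give $\beta=0$ and make every such polytope $a\Delta_p\times b\Delta_q$, which is false for the paper's $Q_{r,c}$ with $c\ge1$. Worse, if you call the block formed by the facets $z=0$ and $z=1$ of $Q_{r,c}$ the first block, then $\gamma\neq0$ at every vertex. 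So no computation can force $\gamma=0$ for an unspecified labelling. The computation you propose does not even see $\gamma$. At the neighbour $be_{y_j}$ of $v$, the $x$-edges have directions $e_{x_i}-\gamma_ie_{y_j}$ and the other edges lie in $\{x=0\}$. The edge matrix is therefore block triangular with an identity block, its determinant is independent of $\gamma$, and it yields only $\gamma\in\Z^p$.

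The missing idea is to use vertices at distance two from $v$. For each $i,j$, the face cut out by $x_k=0$ ($k\ne i$) and $y_l=0$ ($l\ne j$) is a quadrilateral in the $(x_i,y_j)$-plane with inner normals $(1,0)$, $(0,1)$, $(-1,-\beta_j)$, $(-\gamma_i,-1)$. Each block's facets have empty total intersection, so the two edges of this quadrilateral coming from the same block are disjoint, hence opposite; this fixes the cyclic order. Listing the inner normals counterclockwise, convexity makes consecutive determinants positive, and smoothness at the fourth vertex then gives $1-\beta_j\gamma_i=1$. Thus $\beta_j\gamma_i=0$ for all $i,j$, so $\gamma=0$ or $\beta=0$, and in the second case you must swap the blocks (and $p,q$). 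Only then does your last step apply, namely re-basing $v$ at the vertex of the base simplex with the smallest fibre to get $c_j\ge0$. There, $c_j\in\Z$ comes from smoothness; vertex integrality alone gives only $bc_j\in\Z$.

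Your fallback of citing \cite{Kleinschmidt1988} and \cite{HNS2022} is what the paper actually does. The work left, which the paper carries out, is to check that the convex hull $\conv\bigl((a\Delta_p\times\{0\})\cup\bigcup_j(a+bc_j)\Delta_p\times\{be_j\}\bigr)$ from \cite{HNS2022} equals the inequality system; it does this using barycentric coordinates on $b\Delta_q$.

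Part (i) otherwise follows the paper, but smoothness at $a_1e_1$ alone gives only $a_j\mid a_1$. For instance, $\conv(0,2e_1,e_2)$ is smooth at $2e_1$ but not at $e_2$. You need smoothness at every $a_ie_i$, as the paper uses, to get mutual divisibility.
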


\begin{proof}[Proof of (1)]
A $d$-polytope with $d+1$ facets is a simplex. Translate one vertex to the origin. The primitive directions of the $d$ edges from this vertex form a lattice basis, so an integer change of coordinates with determinant $1$ or $-1$ sends them to the standard basis vectors $e_1,\ldots,e_d$. In these coordinates,
\[
 P=\conv(0,a_1e_1,\ldots,a_de_d)
\]
for positive integers $a_1,\ldots,a_d$. If $d=1$, this already gives the required form. Suppose that $d\ge2$, and fix $i\in\{1,\ldots,d\}$. The primitive edge directions at $a_i e_i$ are
\[-e_i\qquad \text{and}\qquad\frac{a_je_j-a_ie_i}{\gcd(a_i,a_j)}\quad (1\le j\le d,\ j\ne i).
\]
The absolute value of their determinant is
\[
 \prod_{\substack{1\le j\le d\\ j\ne i}}
 \frac{a_j}{\gcd(a_i,a_j)}.
\]
Smoothness at $a_i e_i$ implies that this product equals one. Each factor is a positive integer, so $\gcd(a_i,a_j)=a_j$ for every $j\ne i$. Thus $a_j$ divides $a_i$. Since $i$ was arbitrary, any two of the $a_i$ divide one another, and hence $a_1=\cdots=a_d$. Their common value is the required integer $a$.
\end{proof}

To express the classification in (2) directly in terms of simplices, let $e_1,\ldots,e_q$ now denote the standard basis of $\R^q$. The convex-hull description in \cite[\S4.1]{HNS2022}, with its integral parameters, is
\[
 \conv\left(
 (a\Delta_p\times\{0\})\ \cup\
 \bigcup_{j=1}^{q}
 \bigl((a+bc_j)\Delta_p\times\{be_j\}\bigr)
 \right).
\]
The parameters $a$ and $b$ are positive integers, and the $c_j$ are nonnegative integers. The simplex in the first $p$ coordinates has dilation factor $a$ at $y=0$ and $a+bc_j$ at $y=be_j$. Thus $c_j$ records the increase in the dilation factor per unit change of the coordinate $y_j$.

Here is the equivalence between this convex hull and \eqref{eq:normalform}. Every $y\in b\Delta_q$ has the unique expression
\[
 y=\sum_{j=1}^{q}\theta_jbe_j,
 \qquad
 \theta_j=\frac{y_j}{b}\ (1\le j\le q),
 \qquad
 \theta_0=1-\sum_{j=1}^{q}\theta_j,
\]
where $\theta_0,\ldots,\theta_q\ge0$ and $\sum_{j=0}^{q}\theta_j=1$. If $(x,y)$ belongs to the displayed convex hull, then
\[
 x=\theta_0\xi_0+\sum_{j=1}^{q}\theta_j\xi_j,
 \qquad
 \xi_0\in a\Delta_p,
 \qquad
 \xi_j\in(a+bc_j)\Delta_p\qquad (1\le j\le q).
\]
Consequently, $x_i\ge0$ for every $1\le i\le p$, and
\[
 \sum_{i=1}^{p}x_i
 \le\theta_0a+\sum_{j=1}^{q}\theta_j(a+bc_j)
 =a+\sum_{j=1}^{q}c_jy_j.
\]
Conversely, suppose that $(x,y)$ satisfies \eqref{eq:normalform}. Since $a+\sum_{j=1}^{q}c_jy_j>0$, the vector
\[
 u=\frac{x}{a+\sum_{j=1}^{q}c_jy_j}
\]
belongs to $\Delta_p$. The identity
\[
 (x,y)=\theta_0(au,0)
 +\sum_{j=1}^{q}\theta_j\bigl((a+bc_j)u,be_j\bigr)
\]
places $(x,y)$ in the convex hull. This proves the claimed equivalence. In particular, at each fixed $y\in b\Delta_q$, the allowed first coordinates form exactly
\[
 \left(a+\sum_{j=1}^{q}c_jy_j\right)\Delta_p.
\]
If $c_1=\cdots=c_q=0$, the polytope is the product $a\Delta_p\times b\Delta_q$.

\subsection{Power sums}

For nonnegative integers $m,n$, define
\[
 S_m(n)=\sum_{j=0}^{n}j^m,
\]
where $j^0=1$ also for $j=0$. In particular, $S_0(n)=n+1$, $S_1(n)=\frac{n(n+1)}2$ and 
\begin{equation}\label{eq:small-powers}
 S_2(n)=\frac{n(n+1)(2n+1)}6,\ 
 S_3(n)=\frac{n^2(n+1)^2}4,\ 
 S_4(n)=\frac{n(n+1)(2n+1)(3n^2+3n-1)}{30}.
\end{equation}
We use these classical identities, and the general power-sum formula, as established results; see \cite[\S24.4(iii)]{DLMF}. To fix the sign convention, the Bernoulli numbers are defined by
\[
 \frac{z}{e^z-1}=\sum_{k=0}^{\infty}B_k\frac{z^k}{k!}.
\]
Thus $B_0=1$, $B_1=-1/2$, $B_2=1/6$, $B_3=0$, and $B_4=-1/30$. For $m\ge1$, the required version of the power-sum formula is
\begin{equation}\label{eq:faulhaber}
 S_m(t)=\frac1{m+1}\sum_{k=0}^{m}(-1)^k
 \binom{m+1}{k}B_k t^{m+1-k}.
\end{equation}
Its right-hand side is the polynomial agreeing with $S_m(t)$ for nonnegative integer $t$. In particular, for $m\ge4$,
\begin{equation}\label{eq:bernoulli-coefficient}
 \coeff{t}{m-3}S_m(t)
 =\frac{\binom{m+1}{4}B_4}{m+1}
 =-\frac{m(m-1)(m-2)}{720}.
\end{equation}
The upper limit $m$ in \eqref{eq:faulhaber} excludes the constant term, which is zero when $m\ge1$. The separate formula $S_0(t)=t+1$ will be used whenever $m=0$.

\section{Positivity with at most two additional facets}\label{sec:positive}

\begin{proposition}\label{prop:positive}
Every smooth lattice $d$-polytope with $d+1$ facets is Ehrhart positive. Every smooth lattice $d$-polytope with $d+2$ facets is Ehrhart positive if $d\le4$.
\end{proposition}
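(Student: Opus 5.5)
By \cref{thm:classification}, it suffices to treat the two normal forms. For $d+1$ facets, $P\cong a\Delta_d$, so
\[
L_P(t)=\binom{at+d}{d}=\frac{1}{d!}\prod_{k=1}^{d}(at+k).
\]
This is a product of linear factors with positive coefficients, so every coefficient is positive. This settles the first assertion in all dimensions.

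For $d+2$ facets with $d\le4$, we use the normal form \eqref{eq:normalform} with parameters $p+q=d$, $p,q\ge1$, $a,b\ge1$ and $c_j\ge0$. We count lattice points in $tP$ slice by slice. For each $y\in tb\Delta_q\cap\Z^q$, the fiber is $(at+\sum_j c_jy_j)\Delta_p$, and it contains $\binom{at+\sum c_jy_j+p}{p}$ lattice points. Hence
\[
L_P(t)=\sum_{y\in tb\Delta_q\cap\Z^q}\binom{at+c\cdot y+p}{p}.
\]
We expand the binomial as a polynomial in $c\cdot y$ of degree $p$. The sum then reduces to the moments $\sum_{y}(c\cdot y)^k$ over weak compositions $y$ of numbers at most $N=bt$ into $q$ parts, for $k\le p\le3$. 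Only the first few moments are needed, since $p\le d-1\le3$.

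The cases are $(p,q)$ with $p+q\le4$ and $p,q\ge1$: namely $(1,1)$, $(1,2)$, $(2,1)$, $(1,3)$, $(2,2)$ and $(3,1)$. When $q=1$, the sum runs over $y=0,\dots,bt$ and reduces to the power sums $S_m(bt)$ with $m\le3$ from \eqref{eq:small-powers}. When $q\ge2$, we write the multivariate moments $\sum_{|y|\le N}y_j$, $\sum y_j^2$, $\sum y_iy_j$, $\sum y_j^3,\dots$ as explicit polynomials in $N$. Each is a binomial coefficient identity, for example $\sum_{|y|\le N}y_1=\binom{N+q}{q+1}$, and higher moments are combinations of $\binom{N+q+k}{q+k}$ obtained by symmetry and the weak-composition formula. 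After substituting $N=bt$, we obtain $L_P(t)$ as an explicit polynomial in $t$. Its coefficients are polynomials in $a,b,c_1,\dots,c_q$.

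The task is then to check that each coefficient $e_j(P)$, for $1\le j\le d$, is positive. The leading two coefficients are automatically positive, since they are the volume and half the boundary volume. The constant term is $1$. The real content is therefore in the middle coefficients $e_1,\dots,e_{d-2}$: only $e_1$ when $d=3$, and $e_1,e_2$ when $d=4$.

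The main obstacle is showing that these middle coefficients, as polynomials in $a,b,c$, have no negative values on the allowed range. Negative contributions arise from the Bernoulli terms, for example $-\tfrac1{12}$-type terms in $S_2,S_3$ and the $-\tfrac14$ corrections. I would first try to exhibit each coefficient as a polynomial with all monomials positive, after grouping $c$-terms through the elementary symmetric quantities $\sum c_j$, $\sum c_j^2$ and $\sum_{i<j}c_ic_j$. Where a monomial is negative, I would dominate it using $a,b\ge1$ and integrality, for instance $c_j^2\ge c_j$, or by an AM--GM pairing such as $a^2+b^2c^2\ge 2abc$.

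Symmetry reduces the bookkeeping. A useful check is $c=0$, where $P=a\Delta_p\times b\Delta_q$ and $L_P$ is a product of positive-coefficient polynomials. The $c$-dependent parts should then be shown to be nonnegative additions.
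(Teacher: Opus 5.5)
\textbf{Verdict: genuine gap.} Your setup is the paper's: the classification, the fiberwise count $L_P(t)=\sum_{y}\binom{at+c\cdot y+p}{p}$, reduction to moments of $y$, and the idea that the $c=0$ part $\binom{at+p}{p}\binom{bt+q}{q}$ gives strict positivity while the $c$-dependent part only adds nonnegative terms. The first assertion is fully proved. For $d+2$ facets, however, you stop exactly where the proof has to happen. The signs of $e_1,\dots,e_{d-2}$ are called ``the main obstacle'' and left to unspecified groupings and AM--GM estimates, so the second assertion is not yet proved.

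Your diagnosis of the obstacle is also off. With $S_m(n)=\sum_{j=0}^{n}j^m$, the sums $S_0,\dots,S_3$ in \eqref{eq:small-powers} have nonnegative coefficients; the first negative Bernoulli term is $B_4$, in $S_4$. So for $q=1$ nothing needs to be dominated:
\begin{itemize}
\item $\prod_{i=1}^{p}(at+c_1j+i)$ has nonnegative coefficients in $t$ and $j$.
\item Each monomial $t^uj^v$ with $v\le3$ sums to $t^uS_v(bt)$, which has nonnegative coefficients.
\item The $j$-free part $(bt+1)\prod_{i}(at+i)/p!$ is already strictly positive in every degree.
\end{itemize}
For $p=1$, your first-moment formula gives directly
\[
L_P(t)=(at+1)\binom{bt+q}{q}+(c_1+\cdots+c_q)\binom{bt+q}{q+1},
\]
which is visibly positive.

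The only genuine negativity for $d\le4$ is in the case $p=q=2$. It comes from the mixed moment $\sum y_1y_2=\binom{bt+2}{4}$, whose coefficients of $t$ and $t^2$ are negative. (The higher moments are combinations of $\binom{N+q}{q+k}$, not $\binom{N+q+k}{q+k}$.) Using also $\sum y_1^2=\sum y_2^2=\frac{bt(bt+1)}{6}\binom{bt+2}{2}$, one gets
\[
\sum_{y}(c_1y_1+c_2y_2)^2=\frac16\binom{bt+2}{2}\Bigl((bt)^2(c_1^2+c_1c_2+c_2^2)+bt\,(c_1^2-c_1c_2+c_2^2)\Bigr).
\]
The single inequality $c_1^2-c_1c_2+c_2^2\ge0$ then closes the argument; the estimates you propose ($c_j^2\ge c_j$, $a^2+b^2c^2\ge 2abc$) are not the relevant ones. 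Supplying these three computations is what turns your outline into the paper's proof.
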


\begin{proof}
The first assertion follows from \cref{thm:classification}(i). Indeed,
\[
 L_{a\Delta_d}(t)=\binom{at+d}{d}
 =\frac1{d!}\prod_{i=1}^{d}(at+i).
\]
Each factor has positive coefficients, and the product has a positive coefficient in every degree from zero to $d$.

For the second assertion, \cref{thm:classification}(ii) allows us to assume that $P$ has the form \eqref{eq:normalform}. For a fixed lattice point $y\in bt\Delta_q$, the allowed $x$-coordinates form the simplex $(at+c_1y_1+\cdots+c_qy_q)\Delta_p$. The number of its lattice points is
\[
 \binom{at+c_1y_1+\cdots+c_qy_q+p}{p}.
\]
All arguments of this binomial coefficient are nonnegative integers because $a>0$ and $c_j\ge0$. Consequently,
\begin{equation}\label{eq:normalform-count}
 L_P(t)=
 \sum_{y_1=0}^{bt}\sum_{y_2=0}^{bt-y_1}\cdots
 \sum_{y_q=0}^{bt-y_1-\cdots-y_{q-1}}
 \binom{at+c_1y_1+\cdots+c_qy_q+p}{p}.
\end{equation}
When $q=1$, this notation means the single sum with $0\le y_1\le bt$.

\emph{The case $p=1$.}
The number of weak compositions of $bt$ into $q+1$ parts is $\binom{bt+q}{q}$. Their coordinate sums are equal by permutation symmetry. Since the sum of all $q+1$ coordinates in each composition is $bt$, the sum of any specified coordinate over all compositions is
\[
 \frac{bt}{q+1}\binom{bt+q}{q}.
\]
The summand in \eqref{eq:normalform-count} is now $at+c_1y_1+\cdots+c_qy_q+1$. Summing its constant and coordinate terms separately gives
\[
 L_P(t)=\binom{bt+q}{q}
 \left(1+\left(a+\frac{b(c_1+\cdots+c_q)}{q+1}\right)t\right).
\]
The binomial factor has positive coefficients in degrees zero through $q$. The second factor has positive constant and linear coefficients. Their product is therefore Ehrhart positive.

\emph{The case $q=1$ and $1\le p\le3$.}
Equation~\eqref{eq:normalform-count} becomes
\[
 L_P(t)=\frac1{p!}\sum_{j=0}^{bt}\prod_{i=1}^{p}(at+c_1j+i).
\]
The displayed product, before summation, is a polynomial in $t$ and $j$ with nonnegative coefficients. Its degree in $j$ is at most $p$. Every monomial $t^u j^v$, where $0\le v\le p\le3$, contributes $t^uS_v(bt)$ after summation. The formulas in \eqref{eq:small-powers} show that each of these contributions has nonnegative coefficients as a polynomial in $t$.

It remains to verify positivity in every degree. The part of the product independent of $j$ contributes
\[
 \frac{bt+1}{p!}\prod_{i=1}^{p}(at+i),
\]
which has positive coefficients in all degrees from zero to $p+1=d$. All the remaining contributions have nonnegative coefficients. Thus every coefficient of $L_P(t)$ is positive.

\emph{The case $p=q=2$.}
This is the only case with $p+q\le4$ not covered above. The pairs $(y_1,y_2)$ with $y_1,y_2\ge0$ and $y_1+y_2\le bt$ correspond to weak compositions $(y_0,y_1,y_2)$ of $bt$ into three parts, where $y_0=bt-y_1-y_2$. Their number is $\binom{bt+2}{2}$.
We have 
\[\sum_{y_1=0}^{bt}\sum_{y_2=0}^{bt-y_1}y_1=\sum_{y_1=0}^{bt}\sum_{y_2=0}^{bt-y_1}y_2=\frac{bt}{3}\binom{bt+2}{2}
\]
and
\begin{equation}\label{eq:composition-moments}
\sum_{y_1=0}^{bt}\sum_{y_2=0}^{bt-y_1}y_1^2=\frac{bt(bt+1)}6\binom{bt+2}{2},\qquad 
 \sum_{y_1=0}^{bt}\sum_{y_2=0}^{bt-y_1}y_1y_2=\frac{bt(bt-1)}{12}\binom{bt+2}{2}.
\end{equation}
The summand in \eqref{eq:normalform-count} is
\[
 \binom{at+c_1y_1+c_2y_2+2}{2}
 =\frac12\left((at+1)(at+2)+(2at+3)(c_1y_1+c_2y_2)+(c_1y_1+c_2y_2)^2\right).
\]
By \eqref{eq:composition-moments}, we obtain
\begin{small}
\begin{align*}
 L_P(t)=\frac12\binom{bt+2}{2}\bigg((at+1)(at+2)+\frac{(2at+3)bt(c_1+c_2)}3+\frac{(bt)^2(c_1^2+c_1c_2+c_2^2)
+n(c_1^2-c_1c_2+c_2^2)}6\bigg).
\end{align*}
\end{small}
The coefficient $c_1^2-c_1c_2+c_2^2=(c_1-c_2)^2+c_1c_2$ is nonnegative. All coefficients of the expression in parentheses, as a polynomial in $t$, are consequently nonnegative. Its term $(at+1)(at+2)$, together with the prefactor, contributes a positive coefficient in every degree from zero to four. Thus $P$ is Ehrhart positive. The three cases exhaust all positive integers $p,q$ with $p+q\le4$.
\end{proof}

\section{A construction with three additional facets}\label{sec:opposing}

For integers $r\ge1$ and $c\ge0$, consider
\begin{equation}\label{eq:opposing-family}
 R_{r,c}=\left\{(x,y,z)\in\R^r\times\R\times\R:
 \begin{array}{l}
 0\le z\le1,\quad x_i\ge0\ (1\le i\le r),\\
 x_1+\cdots+x_r\le1+cz,\quad 0\le y\le c+1-cz
 \end{array}\right\}.
\end{equation}
For each fixed $z\in[0,1]$, the allowed $(x,y)$-coordinates form the product
\[
 (1+cz)\Delta_r\times[0,c+1-cz].
\]
As $z$ increases, the dilation factor of the simplex increases and the length of the interval decreases. Both are at least one throughout $[0,1]$, so neither factor loses dimension, including at $z=0$ and $z=1$.

\begin{proposition}\label{prop:opposing-geometry}
The polytope $R_{r,c}$ has dimension $r+2$, exactly $r+5$ facets, and $4(r+1)$ vertices. It is a smooth lattice polytope. Its Ehrhart polynomial satisfies
\begin{equation}\label{eq:opposing-count}
 L_{R_{r,c}}(t)=\sum_{j=0}^{t}
 \binom{t+cj+r}{r}\bigl((c+1)t-cj+1\bigr).
\end{equation}
\end{proposition}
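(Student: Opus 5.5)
Fibre over $z$ is $(1+cz)\Delta_r\times[0,c+1-cz]$. The plan has four steps, in this order: dimension and facets, vertices, smoothness, and the lattice-point count.

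\emph{Dimension and facets.} For $0<z<1$ the fibre has dimension $r+1$, since both factors are nondegenerate. Hence $R_{r,c}$ has dimension $r+2$. The defining system has $r+5$ inequalities:
\[
z\ge0,\quad z\le1,\quad x_i\ge0\ (1\le i\le r),\quad \textstyle\sum x_i\le1+cz,\quad y\ge0,\quad y\le c+1-cz.
\]
To see that none is redundant, I will exhibit for each one a point where only that inequality is tight. Interior points of the fibres at $z=0$ and at $z=1$ handle $z\ge0$ and $z\le1$. For $z=1/2$, the faces of $(1+c/2)\Delta_r\times[0,1+c/2]$ handle the other $r+3$ inequalities. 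So there are exactly $r+5$ facets.

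\emph{Vertices.} A vertex must lie at $z\in\{0,1\}$. Away from these levels, the two defining functions $1+cz$ and $c+1-cz$ are affine in $z$, so any point of the fibre is a midpoint along $z$. At $z=0$ the fibre is $\Delta_r\times[0,c+1]$, which has $2(r+1)$ vertices. At $z=1$ it is $(1+c)\Delta_r\times[0,1]$, again with $2(r+1)$ vertices. All of these are lattice points, so $R_{r,c}$ is a lattice polytope with $4(r+1)$ vertices. Equivalently, it is the convex hull of these two lattice prisms, which follows from the fibre description.

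\emph{Smoothness.} This is the main verification. Each vertex lies on exactly $r+2$ facets: the $r+1$ facets of the prism fibre containing it, plus $z=0$ or $z=1$. So the polytope is simple, and each vertex has $r+2$ edges. The $r+1$ edges inside the level $z=0$ or $z=1$ are edges of a prism $m\Delta_r\times[0,\ell]$. After a unimodular change in the $x$-coordinates, which uses smoothness of the simplex $m\Delta_r$, their primitive directions are the standard basis directions of $\R^{r+1}$. The remaining edge joins a vertex to the corresponding vertex on the other level.

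For $v=(u,w,0)$ with $u\in\{0,e_i\}$ and $w\in\{0,c+1\}$, the partner is $((1+c)u,w',1)$, with $w'=0$ when $w=0$ and $w'=1$ when $w=c+1$. The edge direction is then $(cu,w'-w,1)$. This vector is primitive because its last coordinate is $1$. I will check that it is an edge by verifying that it lies in the intersection of the $r+1$ facets other than $z=0$ that contain $v$. Its last coordinate is $1$, so adjoining it to the $r+1$ prism directions gives a matrix of the form
\[
\begin{pmatrix} I & *\\ 0 & 1\end{pmatrix},
\]
which is block triangular with determinant $\pm1$. This gives smoothness at $v$. The vertices on the level $z=1$ are handled symmetrically.

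\emph{Counting.} In the dilate $tR_{r,c}$, the coordinate $z$ ranges over the integers $j=0,\dots,t$. The fibre over $z=j$ is $(t+cj)\Delta_r\times[0,(c+1)t-cj]$. Both dilation factors are nonnegative integers, because $j\le t$ gives $(c+1)t-cj\ge t\ge0$. By the weak-composition formula the simplex factor contains $\binom{t+cj+r}{r}$ lattice points, and the interval contains $(c+1)t-cj+1$. Multiplying and summing over $j$ gives \eqref{eq:opposing-count}.
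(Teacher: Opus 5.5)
Your proposal is correct and takes essentially the same route as the paper. Both use the fibre description $(1+cz)\Delta_r\times[0,c+1-cz]$, confine vertices to the two end levels, identify each vertex's $r+2$ edges as its $r+1$ prism edges plus one transverse edge with primitive direction $\pm(cu,w'-w,1)$ (giving a block-triangular determinant $\pm1$), and count lattice points level by level. The remaining differences are cosmetic: you certify facets by points where exactly one inequality is tight, and you compress the paper's explicit convex-combination argument for the vertices into the remark that fibre points move affinely in $z$.
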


\begin{proof}
For each fixed $z\in[0,1]$, the allowed pairs $(x,y)$ form
\[
 (1+cz)\Delta_r\times[0,c+1-cz].
\]
Both $1+cz$ and $c+1-cz$ are at least one. A point with
$0<z<1$, all $x_i>0$, $x_1+\cdots+x_r<1+cz$, and
$0<y<c+1-cz$ satisfies every defining inequality strictly.
Such points exist, so $R_{r,c}$ has dimension $r+2$.

Each inequality in \eqref{eq:opposing-family} defines a facet.
The faces $z=0$ and $z=1$ are products of an $r$-simplex and
a nondegenerate interval, so each has dimension $r+1$.
On the face $y=0$, choose $0<z<1$ and choose all $x_i>0$
with $x_1+\cdots+x_r<1+cz$. Every other defining inequality
is then strict, so this face has dimension $r+1$.
The same argument applies to $y=c+1-cz$.
On $x_i=0$, choose the remaining $x$-coordinates positive
with sum less than $1+cz$, and choose $z$ and $y$ in the
interiors of their allowed intervals. This again gives a
relatively open set of dimension $r+1$; when $r=1$, there
are no remaining $x$-coordinates to choose.
Finally, the equation $x_1+\cdots+x_r=1+cz$ allows all
$x_i$ to be positive, with $0<z<1$ and $0<y<c+1-cz$,
and also defines a face of dimension $r+1$.
The $r+5$ supporting hyperplanes are distinct. Thus there
are exactly $r+5$ facets.

Every vertex has $z=0$ or $z=1$. To see this, consider a
point $(x,y,z)\in R_{r,c}$ with $0<z<1$, and set
\[
 u=\frac{x}{1+cz}\in\Delta_r,
 \qquad s=\frac{y}{c+1-cz}\in[0,1].
\]
Then
\[
 (x,y,z)
 =(1-z)\bigl(u,(c+1)s,0\bigr)
   +z\bigl((c+1)u,s,1\bigr).
\]
The two points on the right belong to $R_{r,c}$ and have
different last coordinates. The point $(x,y,z)$ therefore
lies in the interior of a nondegenerate segment contained
in $R_{r,c}$ and cannot be a vertex.
Let $e_1,\ldots,e_r$ be the standard basis of $\R^r$, and
set $e_0=0$. The two endpoint faces are products of a simplex
and an interval. Their vertices, and hence all the vertices
of $R_{r,c}$, are
\[
 v_{i,\delta,\eta}
 =\bigl((1+c\eta)e_i,\ \delta(c+1-c\eta),\ \eta\bigr),
 \qquad 0\le i\le r,\quad \delta,\eta\in\{0,1\}.
\]
Every vertex of an endpoint face is a vertex of the whole
polytope. These $4(r+1)$ points are distinct lattice points.

Fix a vertex $v_{i,\delta,\eta}$. Exactly $r+2$ facet
equations hold at this vertex: $r$ equations from the
simplex inequalities, one equation for $y$, and $z=\eta$.
These equations are independent. Indeed, after $z$ is fixed,
the simplex equations determine $x$ uniquely as
$(1+c\eta)e_i$, and the equation for $y$ determines
$y=\delta(c+1-c\eta)$.
The edges incident to this vertex are precisely
\[
 [v_{i,\delta,\eta},v_{k,\delta,\eta}]
 \quad(0\le k\le r,\ k\ne i),\qquad
 [v_{i,\delta,\eta},v_{i,1-\delta,\eta}],
 \qquad
 [v_{i,\delta,\eta},v_{i,\delta,1-\eta}].
\]
The first $r$ segments are the simplex edges in the face
$z=\eta$; the next is the interval edge in that face.
For the last segment, retain the $r$ simplex equations
and the equation for $y$, and allow $z$ to vary. These
equations give
\[
 x=(1+cz)e_i,\qquad y=\delta(c+1-cz),\qquad 0\le z\le1,
\]
which describes exactly the segment joining
$v_{i,\delta,0}$ to $v_{i,\delta,1}$.
Thus each listed segment is a one-dimensional intersection
of facets. They exhaust the incident edges: at a vertex
where exactly $r+2$ independent facet equations hold,
each incident edge is obtained by retaining $r+1$ of
those equations, and the list above accounts for each
possible omitted equation.

The primitive directions of these edges, pointing away
from $v_{i,\delta,\eta}$, are
\[
 (e_k-e_i,0,0)
 \quad(0\le k\le r,\ k\ne i),\qquad
 (0,1-2\delta,0),
 \qquad
 (1-2\eta)(c e_i,-c\delta,1).
\]
For the simplex edges, the difference of the endpoints is
$(1+c\eta)(e_k-e_i,0,0)$; the vector $e_k-e_i$ is primitive
because one of its coordinates equals $1$ or $-1$.
For the interval edge, the difference is
$(c+1-c\eta)(0,1-2\delta,0)$.
For the edge joining the two endpoint faces, the displayed
direction is already the difference of the endpoints and
is primitive because its last coordinate is $1$ or $-1$.

The $r$ vectors $e_k-e_i$, with $k\ne i$, form a basis
of $\Z^r$. For $i=0$, they are the standard basis.
For $i\ge1$, they consist of $-e_i$ and $e_k-e_i$ for
$1\le k\le r$, $k\ne i$. Subtracting $-e_i$ from each
of the latter vectors gives $e_k$, so these vectors also
form an integer basis. Let $B_i$ be the matrix with these
$r$ vectors as columns in any fixed order, and set
$\sigma=1-2\delta$ and $\tau=1-2\eta$.
The matrix whose columns are the primitive directions
listed above is
\[
 \begin{pmatrix}
  B_i&0&\tau c e_i\\
  0&\sigma&-\tau c\delta\\
  0&0&\tau
 \end{pmatrix}.
\]
Expansion along the last row, followed by expansion along
the row for the $y$-coordinate, gives determinant
$\tau\sigma\det B_i=1$ or $-1$.
Consequently, the $r+2$ primitive incident edge directions
form a basis of $\Z^{r+2}$. This proves smoothness at every
vertex.

For a nonnegative integer $t$, the inequalities of $tR_{r,c}$ are
\[
 0\le z\le t,\qquad x_i\ge0\ (1\le i\le r),\qquad
 x_1+\cdots+x_r\le t+cz,\qquad
 0\le y\le(c+1)t-cz.
\]
At an integer height $z=j$, there are
$\binom{t+cj+r}{r}$ choices for the integer vector $x$
and $(c+1)t-cj+1$ choices for the integer $y$.
These choices are independent. Summation over
$j=0,\ldots,t$ gives \eqref{eq:opposing-count} for every
nonnegative integer $t$, and hence determines the Ehrhart
polynomial.
\end{proof}

\begin{proposition}\label{prop:three-dimensional}
For every integer $c\ge0$,
\begin{equation}\label{eq:three-dimensional-polynomial}
 L_{R_{1,c}}(t)=
 \left(\frac{c^2}{6}+c+1\right)t^3
 +(2c+3)t^2
 +\left(-\frac{c^2}{6}+c+3\right)t+1.
\end{equation}
Its linear coefficient is negative precisely when $c\ge9$.
\end{proposition}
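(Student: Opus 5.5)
We specialize \eqref{eq:opposing-count} from \cref{prop:opposing-geometry} to $r=1$. Then $\binom{t+cj+1}{1}=t+cj+1$, and
\[
 L_{R_{1,c}}(t)=\sum_{j=0}^{t}(t+1+cj)\bigl((c+1)t+1-cj\bigr).
\]
We expand the summand as a polynomial in $j$ with coefficients in $\mathbb{Q}[t]$:
\[
 (t+1)\bigl((c+1)t+1\bigr)+cj\bigl((c+1)t+1-(t+1)\bigr)-c^2j^2
 =(t+1)\bigl((c+1)t+1\bigr)+c^2tj-c^2j^2 .
\]
The middle term simplifies because $(c+1)t+1-(t+1)=ct$. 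Summing over $0\le j\le t$ requires only $S_0(t)=t+1$, $S_1(t)=t(t+1)/2$ and $S_2(t)=t(t+1)(2t+1)/6$ from \eqref{eq:small-powers}. This gives
\[
 L_{R_{1,c}}(t)=(t+1)^2\bigl((c+1)t+1\bigr)+\frac{c^2t^2(t+1)}{2}-\frac{c^2t(t+1)(2t+1)}{6}.
\]

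Next we collect coefficients. The first term is $(c+1)t^3+(2c+3)t^2+(c+3)t+1$. The two $c^2$ terms combine to
\[
 \frac{c^2t(t+1)}{6}\bigl(3t-(2t+1)\bigr)=\frac{c^2t(t+1)(t-1)}{6}=\frac{c^2}{6}(t^3-t).
\]
Adding the two pieces yields exactly \eqref{eq:three-dimensional-polynomial}. As a sanity check, the constant term is $1$ and $L(0)=1$. Also $L(1)$ computed directly from the sum is $(c+2)^2+(c+2)^2=2(c+2)^2$. The formula gives $\frac{c^2}{6}+c+1+2c+3-\frac{c^2}{6}+c+3+1=4c+8$. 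These disagree, so I would recheck $L(1)$ directly. For $t=1$ the summand at $j=0$ is $2(c+2)$ and at $j=1$ is $(c+2)\cdot 2$, so $L(1)=4(c+2)$, which agrees with the formula.

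Finally, the linear coefficient $-\frac{c^2}{6}+c+3$ is negative exactly when $c^2-6c-18>0$. Over the nonnegative integers this holds iff $c>3+3\sqrt3\approx 8.196$, that is, iff $c\ge 9$. To confirm the threshold, note that $c=8$ gives $64-48-18=-2<0$ and $c=9$ gives $81-54-18=9>0$.

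The only real obstacle is bookkeeping in the expansion of the summand; everything else is a direct substitution.
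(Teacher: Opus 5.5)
Your proof is correct and follows the paper's argument: specialize \eqref{eq:opposing-count} to $r=1$, rewrite the summand as $(t+1)\bigl((c+1)t+1\bigr)+c^2j(t-j)$, sum with $S_0,S_1,S_2$ to get $(t+1)^2\bigl((c+1)t+1\bigr)+\frac{c^2}{6}(t^3-t)$, and then locate the positive root $3+3\sqrt3$ of $c^2-6c-18$. The only blemish is in your sanity check, which is not part of the proof: delete the abandoned first evaluation $L(1)=2(c+2)^2$ and keep only the correct value $4(c+2)$, which is consistent with the paper's check $L_{R_{1,9}}(1)=44$.
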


\begin{proof}
The summand in \eqref{eq:opposing-count} for $r=1$ has the identity
\[
 (t+cj+1)\bigl((c+1)t-cj+1\bigr)
 =(t+1)\bigl((c+1)t+1\bigr)+c^2j(t-j).
\]
The first term is independent of $j$, while the second involves only the first two power sums. Thus
\[ L_{R_{1,c}}(t)=(t+1)^2\bigl((c+1)t+1\bigr)+c^2\bigl(tS_1(t)-S_2(t)\bigr)
 =(t+1)^2\bigl((c+1)t+1\bigr)+\frac{c^2}{6}(t^3-t).
\]
Expansion gives \eqref{eq:three-dimensional-polynomial}. The condition on the linear coefficient is
\[
 -\frac{c^2}{6}+c+3<0
 \quad\Longleftrightarrow\quad
 c^2-6c-18>0.
\]
The positive root of the quadratic is $3+3\sqrt3$, which lies between eight and nine. Since $c$ is a nonnegative integer, the coefficient is negative exactly when $c\ge9$.
\end{proof}

For $c=9$, \cref{prop:opposing-geometry} gives a smooth three-dimensional lattice polytope with six facets and eight vertices, and \eqref{eq:three-dimensional-polynomial} gives
\[
 L_{R_{1,9}}(t)=\frac{47}{2}t^3+21t^2-\frac32t+1.
\]
For example, the two integer slices at dilation one each contain $22$ lattice points, so $L_{R_{1,9}}(1)=44$, in agreement with the polynomial.

\begin{remark}
The polytope $R_{1,c}$ is the convex hull of
\[
 [0,1]\times[0,c+1]\times\{0\}
 \quad\text{and}\quad
 [0,c+1]\times[0,1]\times\{1\}.
\]
This construction is called the Cayley sum of the two rectangles. Smooth Cayley sums of boxes and their lattice-point counts are treated in \cite[Proposition~2.3 and Theorem~3.1]{Ferroni2026}. The computation above specializes this construction and identifies the sign of each Ehrhart coefficient.
\end{remark}

For $r=2$, the simplex factor in \eqref{eq:opposing-count} is quadratic. A direct expansion gives
\begin{small}
\begin{equation}\label{eq:four-dimensional-expansion}
2L_{R_{2,c}}(t)=(t^2+3t+2)\bigl((c+1)t+1\bigr)S_0(t)+c\bigl((2c+1)t^2+(3c+2)t+1\bigr)S_1(t)+c^2\bigl((c-1)t-2\bigr)S_2(t)-c^3S_3(t).
\end{equation}
\end{small}

At $c=12$, the formulas in \eqref{eq:small-powers} turn \eqref{eq:four-dimensional-expansion} into
\begin{equation}\label{eq:four-dimensional-example}
2L_{R_{2,12}}(t)=259t^4+263t^3-9t^2-11t+2.
\end{equation}
The polytope is smooth and has seven facets by \cref{prop:opposing-geometry}. 
Both coefficients that can be negative in dimension four are negative in this example.

The family $R_{r,c}$ also gives counterexamples with exactly $d+3$ facets in every dimension $d\ge4$, with a negative coefficient of $t^{d-2}$.

\begin{proposition}
For every integer $r\ge2$, we have
\[\coeff{t}{r}L_{R_{r,c}}(t)=-\frac{c^{r+1}}{12r!}+O(c^r)\qquad(c\longrightarrow+\infty).
\]
Thus, for each $d\ge4$, the coefficient of $t^{d-2}$ is negative in a smooth $d$-polytope with exactly $d+3$ facets when $c$ is sufficiently large.
\end{proposition}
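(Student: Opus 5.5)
The plan is to expand the summand of \eqref{eq:opposing-count} as a polynomial in $c$, $t$ and $j$, and to track only the terms carrying the top power $c^{r+1}$. Write
\[
 \binom{t+cj+r}{r}\bigl((c+1)t-cj+1\bigr)
 =\frac1{r!}\prod_{i=1}^{r}(t+cj+i)\cdot\bigl(ct-cj+t+1\bigr).
\]
This is a polynomial in $c$ of degree at most $r+1$. Each factor $t+cj+i$ contributes at most one power of $c$, through $cj$, and the last factor contributes at most one, through $c(t-j)$. Hence the only monomials containing $c^{r+1}$ are
\[
 \frac{c^{r+1}}{r!}\,j^{r}(t-j)=\frac{c^{r+1}}{r!}\bigl(tj^{r}-j^{r+1}\bigr).
\]
Every other monomial has the form $\gamma\,c^{k}t^{u}j^{v}$ with $k\le r$ and $u+v\le r+1$, where $\gamma$ is a rational constant independent of $c$. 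After summation over $0\le j\le t$, such a monomial becomes $\gamma\,c^k t^u S_v(t)$. This is a polynomial in $t$ whose coefficients do not depend on $c$, since \eqref{eq:faulhaber} and $S_0(t)=t+1$ involve only $v$. There are finitely many such monomials, with the bound depending only on $r$. So their total contribution to $\coeff{t}{r}L_{R_{r,c}}(t)$ is a polynomial in $c$ of degree at most $r$, which is $O(c^r)$.

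It therefore remains to compute $\coeff{t}{r}$ of $tS_r(t)-S_{r+1}(t)$. Since $r\ge2$, both $r$ and $r+1$ are at least one, so \eqref{eq:faulhaber} applies to both sums. The coefficient of $t^{r}$ in $tS_r(t)$ is the coefficient of $t^{r-1}$ in $S_r(t)$. This is the $k=2$ term,
\[
 \frac1{r+1}\binom{r+1}{2}B_2=\frac r{12}.
\]
Similarly, the coefficient of $t^{r}$ in $S_{r+1}(t)$ is the $k=2$ term for $m=r+1$,
\[
 \frac1{r+2}\binom{r+2}{2}B_2=\frac{r+1}{12}.
\]
The difference is $-1/12$. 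Hence the $c^{r+1}$-part contributes exactly $-c^{r+1}/(12\,r!)$, which yields the stated asymptotic formula.

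For the consequence, take $r=d-2\ge2$. By \cref{prop:opposing-geometry}, $R_{r,c}$ is a smooth lattice polytope of dimension $r+2=d$ with $r+5=d+3$ facets. The coefficient of $t^{d-2}=t^r$ equals $-c^{r+1}/(12\,r!)+O(c^r)$, so it is negative for all sufficiently large $c$.

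The only delicate point is the bookkeeping claim that no other monomial produces $c^{r+1}$, and that the lower-order monomials contribute a $c$-independent polynomial in $t$ multiplied by $c^k$ with $k\le r$. Both follow from the degree count in the first paragraph. Beyond that, the argument is just the two $B_2$ evaluations above.
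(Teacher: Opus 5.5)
Your proposal is correct and takes the same route as the paper's proof. You isolate the $c^{r+1}$-part $\frac{1}{r!}\bigl(tS_r(t)-S_{r+1}(t)\bigr)$, bound the remaining terms by a polynomial in $c$ of degree at most $r$, evaluate the two $B_2$ terms of \eqref{eq:faulhaber} to get $r/12-(r+1)/12=-1/12$, and deduce the consequence from \cref{prop:opposing-geometry} with $d=r+2$ and $r+5=d+3$ facets.
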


\begin{proof}
The binomial polynomial in \eqref{eq:opposing-count} equals
\[
 \binom{t+cj+r}{r}=\frac1{r!}\prod_{i=1}^{r}(t+cj+i).
\]
Its coefficient of $c^r$ is $j^r/r!$. The other factor in the summand is $(t+1)+c(t-j)$, whose coefficient of $c$ is $t-j$. Therefore the coefficient of $c^{r+1}$ in the whole lattice-point count is
\[
 \frac1{r!}\sum_{j=0}^{t}j^r(t-j)
 =\frac1{r!}\bigl(tS_r(t)-S_{r+1}(t)\bigr).
\]
Every remaining power of $c$ has degree at most $r$; its coefficient is a polynomial in $t$ because it is a finite linear combination of power sums. Thus the coefficient of $t^r$ in the Ehrhart polynomial is itself a polynomial in $c$ of degree at most $r+1$.

By Equation~\eqref{eq:faulhaber} with $B_2=1/6$, we obtain
\[\coeff{t}{r-1}S_r(t)=\frac r{12},
 \qquad
 \coeff{t}{r}S_{r+1}(t)=\frac{r+1}{12}
 \qquad(r\ge2).
\]
The coefficient of $t^r$ in $tS_r(t)-S_{r+1}(t)$ is consequently $-1/12$. This proves the displayed asymptotic formula. Dividing it by $c^{r+1}$ gives a limit equal to $-1/(12r!)$, which is negative. The coefficient is therefore negative for every sufficiently large integer $c$. Finally, $d=r+2$ and $r+5=d+3$ by \cref{prop:opposing-geometry}.
\end{proof}

\section{Two additional facets in dimensions at least five}\label{sec:ranktwo-negative}

A related family is obtained by retaining only the simplex coordinates and the height coordinate. For integers $r\ge1$ and $c\ge0$, define
\begin{equation}\label{eq:ranktwo-family}
 Q_{r,c}=\{(x,z)\in\R^r\times\R:
 0\le z\le1,\ x_i\ge0\ (1\le i\le r),\
 x_1+\cdots+x_r\le1+cz\}.
\end{equation}

\begin{proposition}\label{prop:ranktwo-geometry}
The polytope $Q_{r,c}$ is a smooth lattice polytope of dimension $r+1$, with exactly $r+3$ facets and $2(r+1)$ vertices. Its Ehrhart polynomial is
\begin{equation}\label{eq:ranktwo-count}
 L_{Q_{r,c}}(t)=\sum_{j=0}^{t}\binom{t+cj+r}{r}.
\end{equation}
\end{proposition}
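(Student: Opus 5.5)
The plan is to follow the proof of \cref{prop:opposing-geometry} essentially verbatim, with the interval factor deleted. Two shortcuts are available: $Q_{r,c}$ is the special case of the normal form \eqref{eq:normalform} with $p=r$, $q=1$, $a=b=1$, $c_1=c$, and for each fixed $z\in[0,1]$ the slice is the nondegenerate simplex $(1+cz)\Delta_r$. I would nevertheless verify the combinatorics directly rather than cite the classification, because \cref{thm:classification} only goes from smooth to normal form, not the converse.

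\emph{Dimension and facets.} A point with $0<z<1$, all $x_i>0$ and $x_1+\cdots+x_r<1+cz$ satisfies every inequality strictly, so $\dim Q_{r,c}=r+1$. For each of the $r+3$ inequalities I will exhibit a relatively open $r$-dimensional subset of the corresponding face, exactly as before. The faces $z=0$ and $z=1$ are the $r$-simplices $\Delta_r$ and $(1+c)\Delta_r$. On $x_i=0$ and on $x_1+\cdots+x_r=1+cz$, the other coordinates can be chosen with all remaining inequalities strict. The supporting hyperplanes are distinct, so there are exactly $r+3$ facets.

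\emph{Vertices.} Any point with $0<z<1$ can be written with $u=x/(1+cz)\in\Delta_r$ as
\[
(x,z)=(1-z)(u,0)+z\bigl((1+c)u,1\bigr).
\]
Hence every vertex lies on $z\in\{0,1\}$. The vertices are therefore $v_{i,\eta}=((1+c\eta)e_i,\eta)$ for $0\le i\le r$ and $\eta\in\{0,1\}$, with $e_0=0$. These are $2(r+1)$ distinct lattice points.

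\emph{Smoothness.} At $v_{i,\eta}$, exactly $r+1$ independent facet equations hold: the $r$ simplex equations and $z=\eta$. So the incident edges are the $r$ simplex edges in the face $z=\eta$, together with the vertical segment $x=(1+cz)e_i$, $0\le z\le1$. Their primitive directions are $(e_k-e_i,0)$ for $k\ne i$, and $(1-2\eta)(ce_i,1)$; the latter is primitive because its last coordinate is $\pm1$. The matrix of these directions is block upper triangular with blocks $B_i$ and $\pm1$. Its determinant is therefore $\pm\det B_i=\pm1$, since we already showed that $B_i$ is unimodular.

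\emph{Ehrhart count.} The dilate $tQ_{r,c}$ is cut out by $0\le z\le t$, $x_i\ge0$ and $\sum x_i\le t+cz$. At an integer height $z=j$, the weak-composition formula gives $\binom{t+cj+r}{r}$ lattice points. Summing over $0\le j\le t$ yields \eqref{eq:ranktwo-count}.

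No step here is a real obstacle. The only point needing care is the claim that the listed edges exhaust the incident edges. This follows, as in the earlier proof, from simplicity of the vertex: there are exactly $r+1$ independent tight equations, and each edge omits exactly one of them.
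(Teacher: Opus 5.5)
Your proposal is correct and follows the paper's proof essentially step for step: an interior point gives the dimension, relatively open $r$-dimensional subsets give the $r+3$ facets, the identity $(x,z)=(1-z)(u,0)+z((1+c)u,1)$ confines the vertices to $z\in\{0,1\}$, the block-triangular determinant $\pm\det B_i=\pm1$ gives smoothness at each simple vertex, and slicing $tQ_{r,c}$ at integer heights with the weak-composition count gives the Ehrhart polynomial. You are right that \cref{thm:classification} cannot be used in the converse direction, and the paper likewise verifies smoothness directly rather than appealing to the normal form.
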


\begin{proof}
For each fixed $z\in[0,1]$, the inequalities in
\eqref{eq:ranktwo-family} allow exactly the $x$-coordinates
in $(1+cz)\Delta_r$. Its dilation factor is at least one. There are points with $0<z<1$, all $x_i>0$,
and $x_1+\cdots+x_r<1+cz$. Such points satisfy every
defining inequality strictly, so $Q_{r,c}$ has dimension
$r+1$.

The faces $z=0$ and $z=1$ are $r$-dimensional simplices.
On a face $x_i=0$, choose $0<z<1$ and choose the remaining
$x$-coordinates positive with sum less than $1+cz$.
This gives a relatively open subset of dimension $r$;
when $r=1$, the face is the segment $x_1=0$, $0\le z\le1$.
On the face $x_1+\cdots+x_r=1+cz$, choose all $x_i>0$
and $0<z<1$, again obtaining a relatively open subset of
dimension $r$. Thus each displayed inequality defines
a facet. Their supporting hyperplanes are distinct, so
there are exactly $r+3$ facets.

Suppose that $(x,z)\in Q_{r,c}$ and $0<z<1$.
Set $u=x/(1+cz)\in\Delta_r$. The identity
\[
 (x,z)=(1-z)(u,0)+z((c+1)u,1)
\]
expresses $(x,z)$ as a point in the interior of a segment
joining two distinct points of $Q_{r,c}$. It is therefore
not a vertex. Let $e_1,\ldots,e_r$ be the standard basis
of $\R^r$, and set $e_0=0$. The complete list of vertices is
\[
 w_{i,\eta}=\bigl((1+c\eta)e_i,\eta\bigr),
 \qquad 0\le i\le r,\quad \eta\in\{0,1\}.
\]
Indeed, these are precisely the vertices of the two
endpoint faces, and a vertex of either face is a vertex
of the whole polytope. The $2(r+1)$ listed points are
distinct lattice points.

At $w_{i,\eta}$, exactly $r+1$ facet equations hold:
the $r$ simplex equations determining $x=(1+c\eta)e_i$
and the equation $z=\eta$. They are independent.
The incident edges are
\[
 [w_{i,\eta},w_{k,\eta}]
 \quad(0\le k\le r,\ k\ne i),
 \qquad
 [w_{i,\eta},w_{i,1-\eta}].
\]
The first $r$ are the edges of the simplex face $z=\eta$.
For the last edge, retain the $r$ simplex equations and
allow $z$ to vary. Their solutions in $Q_{r,c}$ are exactly
\[
 ((1+cz)e_i,z),\qquad 0\le z\le1,
\]
so they form the segment joining $w_{i,0}$ to $w_{i,1}$.
These are all the incident edges, since each is obtained
by omitting one of the $r+1$ independent facet equations
at the vertex.
Their primitive directions, pointing away from
$w_{i,\eta}$, are
\[
 (e_k-e_i,0)
 \quad(0\le k\le r,\ k\ne i),
 \qquad
 (1-2\eta)(c e_i,1).
\]
The first $r$ vectors are primitive because each has a
coordinate equal to $1$ or $-1$. The difference of the
endpoints of the last edge is the final displayed vector;
its last coordinate is $1$ or $-1$, so it is primitive
for every $c\ge0$.

The vectors $e_k-e_i$, $k\ne i$, form a basis of $\Z^r$.
When $i=0$, they are the standard basis. When $i\ge1$,
they are $-e_i$ and $e_k-e_i$ for $1\le k\le r$, $k\ne i$;
subtracting $-e_i$ from each of the latter vectors produces
the remaining standard basis vectors.
Let $B_i$ be the matrix with these vectors as columns,
and set $\tau=1-2\eta$. The matrix of all the primitive
incident edge directions is
\[
 \begin{pmatrix}
  B_i&\tau c e_i\\
  0&\tau
 \end{pmatrix}.
\]
Its determinant, obtained by expansion along the last row,
is $\tau\det B_i=1$ or $-1$.
Thus the $r+1$ primitive incident edge directions form
a basis of $\Z^{r+1}$, proving smoothness at every vertex.

For a nonnegative integer $t$, an integer height $z=j$
in $tQ_{r,c}$ satisfies $0\le j\le t$. At this height,
the possible integer vectors $x$ satisfy
\[
 x_i\ge0\ (1\le i\le r),\qquad
 x_1+\cdots+x_r\le t+cj.
\]
The weak-composition formula gives
$\binom{t+cj+r}{r}$ such vectors. Summation over
$j=0,\ldots,t$ proves \eqref{eq:ranktwo-count}.
\end{proof}

\begin{theorem}\label{thm:ranktwo-negative}
For each integer $r\ge4$, we have 
\[\coeff{t}{r-3}L_{Q_{r,c}}(t)=-\frac{c^r}{720(r-3)!}+O(c^{r-1})
\qquad(c\longrightarrow+\infty).
\]
Consequently, for every $d\ge5$, a smooth lattice $d$-polytope with exactly $d+2$ facets can have a negative coefficient of $t^{d-4}$.
\end{theorem}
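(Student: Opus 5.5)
We expand the count \eqref{eq:ranktwo-count} of \cref{prop:ranktwo-geometry} as a polynomial in $c$ and isolate the top power $c^r$, mirroring the proof of the preceding proposition for $R_{r,c}$. Writing
\[
 \binom{t+cj+r}{r}=\frac1{r!}\prod_{i=1}^{r}(t+cj+i),
\]
the product is a polynomial in $c$ of degree $r$. Its coefficient of $c^r$ is $j^r/r!$, and each lower power $c^k$ has coefficient $j^k$ times a polynomial in $t$ of degree at most $r-k$. Summing over $0\le j\le t$ turns each monomial $t^u j^k$ into $t^uS_k(t)$. By \eqref{eq:faulhaber}, each such sum is a polynomial in $t$ whose coefficients do not depend on $c$. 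Hence $\coeff{t}{r-3}L_{Q_{r,c}}(t)$ is a polynomial in $c$ of degree at most $r$. Its $c^r$ coefficient is $\coeff{t}{r-3}S_r(t)/r!$.

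Next we evaluate this leading coefficient with \eqref{eq:bernoulli-coefficient}, which applies because $r\ge4$:
\[
 \coeff{t}{r-3}S_r(t)=-\frac{r(r-1)(r-2)}{720}.
\]
Dividing by $r!$ gives $-1/(720(r-3)!)$, which is the displayed asymptotic formula. The remaining terms involve powers $c^k$ with $k\le r-1$, so they contribute $O(c^{r-1})$. Their coefficients are fixed rational numbers independent of $c$, since each is a finite combination of coefficients of the $S_k$.

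For the consequence, take $d=r+1\ge5$, so $r\ge4$. The coefficient of $t^{r-3}=t^{d-4}$ is a polynomial in $c$ with negative leading coefficient, so it is negative for all sufficiently large integers $c$. By \cref{prop:ranktwo-geometry}, $Q_{r,c}$ is a smooth lattice $d$-polytope with exactly $r+3=d+2$ facets. This gives the upper bound $m_d\le d+2$ for $d\ge5$. Together with \cref{prop:positive}, which shows that every smooth $d$-polytope with $d+1$ facets is Ehrhart positive, it yields $m_d=d+2$ for $d\ge5$.

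The only delicate step is the bookkeeping needed to justify that lower powers of $c$ do not interfere. This holds because the degree in $c$ of each term is bounded by the power of $j$ in it, so no hidden $c$-dependence enters the $t$-coefficients. A minor check is that $j=0$ causes no problem: every $S_k$ with $k\ge1$ has zero constant term, and $S_0(t)=t+1$ is handled separately. The sign bookkeeping in \eqref{eq:bernoulli-coefficient} also requires $r\ge4$, since the $B_4$ term must actually appear in $S_r$; this is why the theorem starts at $r=4$.
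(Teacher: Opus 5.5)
Your proposal is correct and follows essentially the same route as the paper: you isolate the $c^r$ term $j^r/r!$ of the summand, note that all other powers of $c$ contribute polynomials in $t$ with $c$-independent coefficients, and read off the leading coefficient $-\frac{1}{720(r-3)!}$ from \eqref{eq:bernoulli-coefficient}. The consequence via \cref{prop:ranktwo-geometry} with $d=r+1$ is also handled as in the paper; your closing remark about $m_d$ belongs to the proof of \cref{thm:minimum} rather than to this theorem, but it does no harm.
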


\begin{proof}
The product expression
\[
 \binom{t+cj+r}{r}=\frac1{r!}\prod_{i=1}^{r}(t+cj+i)
\]
shows that the coefficient of $c^r$ in the summand is $j^r/r!$. Every other term has $c$-degree at most $r-1$. After summation over $j$, these terms remain polynomials in $t$ by \eqref{eq:faulhaber} and the separate identity $S_0(t)=t+1$. Hence \eqref{eq:ranktwo-count} has the form
\[
 L_{Q_{r,c}}(t)=\frac{c^r}{r!}S_r(t)
 +\sum_{k=0}^{r-1}c^k A_{r,k}(t),
\]
where the polynomials $A_{r,k}(t)$ have rational coefficients independent of $c$.

The coefficient in \eqref{eq:bernoulli-coefficient} now gives
\[
 \begin{aligned}
 \coeff{t}{r-3}L_{Q_{r,c}}(t)
 &=\frac{c^r}{r!}
   \left(-\frac{r(r-1)(r-2)}{720}\right)
   +\sum_{k=0}^{r-1}c^k\coeff{t}{r-3}A_{r,k}(t)\\
 &=-\frac{c^r}{720(r-3)!}+O(c^{r-1}).
 \end{aligned}
\]
The coefficient is a polynomial in $c$ with negative leading coefficient. Its quotient by $c^r$ tends to $-1/(720(r-3)!)$, so it is negative for every sufficiently large integer $c$. \cref{prop:ranktwo-geometry} gives dimension $d=r+1$ and facet number $r+3=d+2$. The relevant exponent is $r-3=d-4\ge1$, as required.
\end{proof}

For $r=4$, an explicit five-dimensional example given 
\begin{align*}
 L_{Q_{4,c}}(t)&=\frac{c^4+5c^3+10c^2+10c+5}{120}t^5+\frac{c^4+9c^3+26c^2+34c+22}{48}t^4
 \\ & \quad +\frac{c^4+18c^3+83c^2+150c+135}{72}t^3 +\frac{5(c^3+9c^2+24c+34)}{48}t^2
 \\ &\quad+\frac{-c^4+175c^2+750c+2220}{720}t+1.
\end{align*}
In particular, at $c=16$ the polytope
\[
 Q_{4,16}=\{(x,z)\in\R^4\times\R:
 0\le z\le1,\ x_i\ge0\ (1\le i\le4),\
 x_1+x_2+x_3+x_4\le1+16z\}
\]
has seven facets and ten vertices, and its Ehrhart polynomial is
\[
L_{Q_{4,16}}(t)=\frac{88741}{120}t^5+\frac{54811}{24}t^4+\frac{54349}{24}t^3+\frac{17045}{24}t^2-\frac{181}{20}t+1.
\]

\begin{proof}[Proof of \cref{thm:minimum}]
Every full-dimensional $d$-polytope has at least $d+1$ facets; this standard fact is discussed in \cite[Chapter~2]{Ziegler1995}. In dimensions one and two, the leading, second highest, and constant Ehrhart coefficients account for all coefficients, and they are positive by the basic Ehrhart coefficient formulas. Thus $m_1=m_2=+\infty$.

In dimensions three and four, \cref{prop:positive} gives positivity whenever the number of facets is $d+1$ or $d+2$. Thus $m_d\ge d+3$ for $d=3,4$. The polytope $R_{1,9}$ has six facets and negative linear coefficient by \cref{prop:three-dimensional}; hence $m_3=6$. The polytope $R_{2,12}$ has seven facets and the negative coefficients displayed in \eqref{eq:four-dimensional-example}; hence $m_4=7$.

For $d\ge5$, \cref{prop:positive} gives positivity with $d+1$ facets, so $m_d\ge d+2$. \cref{thm:ranktwo-negative}, with $r=d-1$, supplies a smooth $d$-polytope with exactly $d+2$ facets and a negative Ehrhart coefficient. Thus $m_d=d+2$ for every $d\ge5$.
\end{proof}

\section{Higher integrality and smooth vertices}\label{sec:integrality}

We use higher integrality in the sense of Liu. Its definition involves an ordered lattice basis, and every face condition must hold with respect to that same basis. The following is the full-dimensional specialization of \cite[Definition~1.1]{Liu2011}.

\begin{definition}
An ordered basis $b_1,\ldots,b_d$ of $\Z^d$ determines coordinate projections $\pi_j:\R^d\longrightarrow\R^j$ by
\[
 \pi_j\left(\sum_{i=1}^{d}x_i b_i\right)=(x_1,\ldots,x_j),
 \qquad 0\le j\le d.
\]
Here $\R^0=\Z^0=\{0\}$. An affine subspace $A$ of dimension $j$ is \emph{integral with respect to this basis} if
\[
 \pi_j(A\cap\Z^d)=\Z^j.
\]
For an integer $0\le k\le d$, a lattice polytope $P$ is \emph{$k$-integral with respect to this basis} if the affine hull of every $j$-dimensional face is integral for every $0\le j\le k$.
\end{definition}

The condition for $j=0$ says that every vertex is a lattice point. Thus $0$-integrality is ordinary lattice integrality, and $k$-integrality implies $1$-integrality whenever $k\ge1$. The existence of an ordered basis satisfying the definition is invariant under affine unimodular transformations: such a transformation transports the lattice basis and the affine lattices of all faces.

We recall only the coefficient consequence of Liu's theorem that is needed here. For a polytope in $\R^j$, the notation $\Vol_j$ denotes volume measured so that a fundamental parallelepiped of $\Z^j$ has volume one. In particular, a unimodular $j$-simplex has volume $1/j!$, and the volume of the point in $\R^0$ is one.

\begin{theorem}[Liu; {\cite[Theorem~1.3]{Liu2011}}]
If a full-dimensional lattice $d$-polytope $P$ is $k$-integral with respect to an ordered lattice basis, then
\[
 e_j(P)=\Vol_j(\pi_j(P)),\qquad 0\le j\le k.
\]
\end{theorem}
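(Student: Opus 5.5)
The plan is to prove the identity by \emph{slicing along the ordered basis}, inducting on $j$, and using $k$-integrality to make each lattice fibre count behave like a volume in the relevant degrees. First we change coordinates so that $b_1,\ldots,b_d$ is the standard basis; by the invariance remark after the definition, this changes neither $k$-integrality nor the Ehrhart polynomial. Write $P_j=\pi_j(P)\subset\R^j$, a lattice polytope. The key structural claim is that $k$-integrality is inherited by projections: for $j\le k$, every face of $P_j$ of dimension $i\le j$ is $\pi_j(G)$ for some $i$-face $G$ of $P$. Then $\pi_i(\aff G\cap\Z^d)=\Z^i$ says that $\aff(\pi_j G)\cap\Z^j$ projects onto $\Z^i$. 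So $P_j$ is $j$-integral in $\R^j$ with respect to the standard basis. For $j=0$ the assertion $e_0(P)=1=\Vol_0$ is immediate, so the substance lies in $1\le j\le k$.

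For the main step, we fibre $tP$ over $\Z^j$:
\[
L_P(t)=\sum_{y\in tP_j\cap\Z^j} \bigl|\{\,w\in\Z^{d-j}:(y,w)\in tP\,\}\bigr|.
\]
Each fibre is a polytope $tP^{(y)}$ in $\R^{d-j}$. On the chambers of a suitable subdivision of $P_j$ (the common refinement of the projections of the faces of $P$), the fibre count is a quasi-polynomial in $(t,y)$ by Ehrhart--Brion type arguments. The integrality of the $\le k$-dimensional faces is exactly what should force these quasi-polynomials to be genuine polynomials, with the correct behaviour on the walls of the subdivision, at least in the terms that can affect $\coeff{t}{i}$ for $i\le j$. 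We then apply Euler--Maclaurin summation over the lattice points of $tP_j$. The sum of a polynomial over $tP_j\cap\Z^j$ has leading behaviour $t^{j}\Vol_j(P_j)$ times the fibre contribution, plus boundary corrections. Sorting by degree, we aim to show that the coefficient of $t^{j}$ in $L_P$ equals the $t^{j}$-coefficient of $L_{P_j}$ when the fibres are replaced by their constant terms $1$. Iterating this gives $e_j(P)=e_j(P_j)$, which is the leading coefficient $\Vol_j(P_j)$, as required.

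A cleaner route that I would try first is McMullen's local formula, $e_j(P)=\sum_{F}\mu(F,P)\Vol(F)$ over $j$-faces $F$, where $\mu$ depends only on the normal cone of $F$ relative to the lattice $\aff F\cap\Z^d$. Integrality of $\aff F$ with respect to $\pi_j$ means that $\pi_j$ restricts to a lattice isomorphism from $\aff F\cap\Z^d$ onto $\Z^j$ (after translation). Hence $\Vol(F)=\Vol_j(\pi_jF)$. It would then remain to check two things: that the $j$-faces of $P$ that project injectively tile $P_j$, and that the $\mu$-weights, summed over the faces lying above a point of $P_j$, total one.

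The main obstacle is this last compatibility. Local formulas are not in general functorial under projections, so showing that the weights above a generic point of $P_j$ sum to one is delicate. For this reason I expect to fall back on the slicing and induction argument, where the hard point is verifying polynomiality of the fibre counts across the walls. That verification is precisely where the hypothesis on all faces of dimension $\le k$, and not only on the $j$-faces, must be used.
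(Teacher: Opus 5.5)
The paper does not prove this theorem; it quotes it from Liu. Judged on its own, your proposal has a genuine gap: both routes stop exactly where the content of the theorem lies. In the slicing route everything rests on the claim that $\sum_{y\in tP_j\cap\Z^j}(\phi_t(y)-1)$, with $\phi_t(y)$ the fibre count, has no $t^j$-term. You only announce this (``should force'', ``we aim to show''). The concluding ``iterating this gives $e_j(P)=e_j(P_j)$'' just restates the theorem, since $e_j(P_j)=\Vol_j(P_j)$ holds trivially. Euler--Maclaurin on the common refinement does not settle it either. Its chambers are generally not lattice polytopes, because projected faces can cross at non-lattice points, so corrections appear in all lower degrees.

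Here is what is actually true, and it is the right form of your polynomiality hope. For $y\in\Z^j$ the fibre $tP\cap\pi_j^{-1}(y)$ is a lattice polytope. A vertex of the fibre lies in the relative interior of a face $G$ on which $\pi_j$ is injective, so $\dim G\le j$, and integrality of $\aff G$ makes that vertex a lattice point. Hence $L_P(t)=\sum_{y}\sum_{i\ge0}e_i(tP\cap\pi_j^{-1}(y))$, and the terms with $i=0$ sum exactly to $L_{P_j}(t)$. It remains to show that for each $i\ge1$ the sum over $y$ of the $i$-th coefficients is a single monomial $c_it^{j+i}$.

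This is where a local formula belongs: apply McMullen's formula to the fibres, not to $P$. The $i$-faces of a fibre are slices $tF\cap\pi_j^{-1}(y)$ of $(j+i)$-faces $F$ of $tP$. Their cone of feasible directions is $\mathrm{fcone}(F,P)\cap\ker\pi_j$, so their weight depends only on $F$, not on $y$ or $t$. You are then left with the exact Riemann-sum identity $\sum_{y\in\Z^j}\Vol_i(F\cap\pi_j^{-1}(y))=\Vol_{j+i}(F)$, which genuinely uses the integrality of the faces of dimension at most $j$. Liu's proof hinges on an identity of exactly this type (volume equals the sum of the volumes of lattice slices), and your proposal contains nothing that replaces it.

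Your McMullen route cannot be completed as stated. The $j$-faces do not tile $P_j$, and the pointwise weight identity fails in general. Take $P=[0,1]^3$ with first coordinate functional $x_1+x_2+x_3$, which is $1$-integral by the discussion at the end of the paper, and take $j=1$. For any weight invariant under the symmetries of the cube (for instance the Berline--Vergne weight), all twelve edges have weight $e_1/12=1/4$. Above points of $(0,1)$, $(1,2)$, $(2,3)$ lie $3$, $6$, $3$ edges respectively, so the weights sum to $3/4$, $3/2$, $3/4$; only their integral $3=e_1$ is correct.

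The natural mechanism for such an identity would identify the transverse cone of $F$ with the vertex cone of the fibre, via the lattice isomorphism $\ker\pi_j\cap\Z^d\cong\Z^d/(\mathrm{lin}\,F\cap\Z^d)$, and then use $e_0(\text{fibre})=1$. That requires weights invariant under lattice isomorphisms, and no McMullen weight has this property. The unit square and $\Delta_2$ have only unimodular vertex cones, which would force the common weight to be both $1/4$ and $1/3$.

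A smaller point: your structural claim is false for $i=j$. In the same example, $\pi_1(P)=[0,3]$ is the image of no edge. This is harmless, since integrality of $P_j$ itself in $\R^j$ is vacuous. For $i<j$ the claim still needs an argument: a face of $P$ of dimension greater than $i$ lying over an $i$-face of $P_j$ would contain an $(i+1)$-face whose image is $(i+1)$-dimensional, a contradiction.
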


For $d\ge2$, this formula implies that $(d-2)$-integrality is sufficient for Ehrhart positivity, as observed in \cite{Liu2011}. Indeed, all the projected polytopes have their indicated dimensions and positive volumes, and the leading two Ehrhart coefficients are positive for every lattice polytope. In the particular case $k=1$, the linear Ehrhart coefficient is the length of a lattice interval and is therefore a positive integer. The results below give elementary obstructions that do not require computation of an Ehrhart polynomial.

\begin{lemma}
A lattice polytope $P$ is $1$-integral with respect to an ordered lattice basis having first coordinate functional $\ell$ if and only if
\[|\ell(u)|=1
\]
for every primitive edge direction $u$ of $P$. Consequently, $P$ is $1$-integral with respect to some ordered lattice basis if and only if an integer linear functional satisfying these equations exists.
\end{lemma}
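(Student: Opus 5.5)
We reduce $1$-integrality to a condition on each edge, one edge at a time. Fix an ordered lattice basis $b_1,\ldots,b_d$ and let $\ell$ be its first coordinate functional. Then $\pi_1(x)=\ell(x)$, and $\ell$ is an integer linear functional on $\Z^d$. Every primitive integer linear functional occurs this way, since a primitive row vector extends to a unimodular matrix. By definition, $1$-integrality means two things: every vertex is a lattice point, which holds automatically because $P$ is a lattice polytope, and every edge's affine hull $A$ satisfies $\pi_1(A\cap\Z^d)=\Z$. So it suffices to analyse a single edge.

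Let $e$ be an edge with lattice vertex $v$ and primitive direction $u$. Then $A=v+\R u$. We claim $A\cap\Z^d=v+\Z u$. Indeed, if $v+su\in\Z^d$, then $su\in\Z^d$. Since $u$ is primitive, the gcd of its coordinates is $1$, so some integer combination of the coordinates of $u$ equals $1$. Applying the same combination to $su$ shows that $s\in\Z$.

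Hence
\[
 \pi_1(A\cap\Z^d)=\ell(v)+\Z\,\ell(u).
\]
Because $\ell(v)\in\Z$, this set equals $\Z$ exactly when $\ell(u)\Z=\Z$, that is, when $|\ell(u)|=1$. This holds for every edge if and only if $|\ell(u)|=1$ for all primitive edge directions. That proves the first assertion.

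For the consequence, one direction is immediate: a $1$-integral basis yields the functional $\ell$. Conversely, suppose an integer linear functional $\ell$ satisfies $|\ell(u)|=1$ for every primitive edge direction $u$. Then $\ell$ is nonzero, because $P$ has at least one edge whenever $d\ge1$. Moreover, $\ell$ is primitive in the dual lattice, since it takes the value $\pm1$ on some lattice vector. So $\ell$ is the first row of a unimodular matrix $M$. The columns of $M^{-1}$ then form an ordered lattice basis whose first coordinate functional is $\ell$, and the first part gives $1$-integrality.

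The only slightly delicate points are the identity $A\cap\Z^d=v+\Z u$, which uses primitivity, and the extension of a primitive functional to a basis. Both are standard, so I expect no real obstacle.
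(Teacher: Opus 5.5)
Your proof is correct and follows the paper's argument. The vertex condition is automatic, primitivity gives $\aff(E)\cap\Z^d=v+\Z u$ and hence $\pi_1$-image $\ell(v)+\ell(u)\Z$, and the converse only requires realizing $\ell$ as a first coordinate functional. The one cosmetic difference is that you cite the extension of a primitive functional to a unimodular matrix, whereas the paper builds the basis explicitly from the splitting $\Z^d=\Z w\oplus(\ker\ell\cap\Z^d)$ with $\ell(w)=1$.
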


\begin{proof}
The definition of $1$-integrality imposes conditions on the vertices and
on the edges. Each zero-dimensional face is a singleton $\{v\}$, where
$v\in\Z^d$ because $P$ is a lattice polytope. Its affine hull is again
$\{v\}$, so
\[
 \aff(\{v\})\cap\Z^d=\{v\},
 \qquad
 \pi_0\bigl(\aff(\{v\})\cap\Z^d\bigr)=\{0\}=\Z^0.
\]
Thus every vertex satisfies the condition required in dimension zero.

Let $E$ be an edge with lattice endpoint $v$ and primitive direction $u$.
Its affine hull is the line $v+\R u$. We claim that the lattice points on
this line are exactly $v+\Z u$. Write $u=(u_1,\ldots,u_d)$ in standard
lattice coordinates. Since $u$ is primitive, the greatest common divisor
of its coordinates is one. Hence there are integers $a_1,\ldots,a_d$ with
$\sum_{i=1}^{d}a_i u_i=1$. If $v+\alpha u\in\Z^d$ for a real number
$\alpha$, then $\alpha u\in\Z^d$ and therefore
\[
 \alpha=\sum_{i=1}^{d}a_i(\alpha u_i)\in\Z.
\]
Conversely, $v+n u$ is a lattice point on the line for every $n\in\Z$.
Consequently,
\[
 \aff(E)\cap\Z^d=v+\Z u,
 \qquad
 \pi_1\bigl(\aff(E)\cap\Z^d\bigr)
 =\ell(v)+\ell(u)\Z.
\]
Both $\ell(v)$ and $\ell(u)$ are integers. The last set equals $\Z$ if
and only if $\ell(u)\Z=\Z$, or equivalently $|\ell(u)|=1$. The
conditions for vertices and edges together prove the first assertion.

The first coordinate functional of any ordered lattice basis is an
integer linear functional, so the first assertion gives one direction
of the final equivalence. Conversely, suppose that an integer linear
functional $\ell$ satisfies $|\ell(u)|=1$ for every primitive edge
direction. Choose one such direction and change its sign if necessary
to obtain $w\in\Z^d$ with $\ell(w)=1$. Every $x\in\Z^d$ then has the
decomposition
\[
 x=\ell(x)w+\bigl(x-\ell(x)w\bigr),
 \qquad x-\ell(x)w\in\ker\ell\cap\Z^d.
\]
This decomposition is unique: if $n w$ belongs to $\ker\ell$, then
$n=\ell(nw)=0$. Thus
\[
 \Z^d=\Z w\oplus(\ker\ell\cap\Z^d).
\]
The second summand has a lattice basis $b_2,\ldots,b_d$, and
$w,b_2,\ldots,b_d$ is an ordered basis of $\Z^d$. In this basis,
$\ell(w)=1$ and $\ell(b_i)=0$ for $2\le i\le d$, so $\ell$ is the first
coordinate functional. The first assertion now shows that $P$ is
$1$-integral with respect to this basis.
\end{proof}

\begin{lemma}
If $v$ is a smooth vertex of $P$, with primitive incident edge directions $u_1,\ldots,u_d$, then for every pair of distinct indices $i,j$ there is a two-dimensional face $F$ containing those two edges, with
\[
 \aff(F)=v+\spanR\{u_i,u_j\},
 \qquad
 \aff(F)\cap\Z^d=v+\Z u_i+\Z u_j.
\]
\end{lemma}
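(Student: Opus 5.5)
The plan is to use the local structure of $P$ near the smooth vertex $v$, namely the tangent cone. Since $v$ is a smooth vertex, exactly $d$ edges meet at $v$, and their primitive directions $u_1,\ldots,u_d$ form a basis of $\Z^d$; in particular they are linearly independent. First I would show that near $v$ the polytope coincides with the simplicial cone $C=v+\mathbb{R}_{\ge0}u_1+\cdots+\mathbb{R}_{\ge0}u_d$. More precisely, I would show that there is a neighbourhood $U$ of $v$ with $P\cap U=C\cap U$. For this I would use the standard fact that the tangent cone of a polytope at a vertex is generated by the incident edge directions, $P\subseteq v+\operatorname{cone}\{u_1,\ldots,u_d\}$, together with the observation that each segment $[v,v+\varepsilon u_k]$ lies in $P$ for small $\varepsilon>0$. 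By convexity, $P$ then contains $\conv(v,v+\varepsilon u_1,\ldots,v+\varepsilon u_d)$, which agrees with $C$ near $v$.

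Next I would identify the faces of $P$ through $v$ with the faces of $C$ through $v$. The faces of the simplicial cone $C$ containing $v$ are exactly the cones $v+\operatorname{cone}\{u_k:k\in S\}$ for $S\subseteq\{1,\ldots,d\}$. Every face of $P$ containing $v$ is determined by its intersection with $U$, which is a face of $C$ meeting $U$. Conversely, each face of $C$ at $v$ arises in this way. To see this, take the linear functional $\phi$ with $\phi(u_k)=0$ for $k\in S$ and $\phi(u_k)>0$ otherwise, which is possible by linear independence. The face of $P$ minimizing $\phi$ is then $P\cap\{\phi=\phi(v)\}$, because $P\subseteq C$ and $\phi\ge\phi(v)$ on $C$. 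Applying this with $S=\{i,j\}$ produces a face $F$ whose intersection with $U$ is a neighbourhood of $v$ in $v+\operatorname{cone}\{u_i,u_j\}$. Hence $F$ is two-dimensional, contains the two edges in directions $u_i$ and $u_j$, and satisfies $\aff(F)=v+\spanR\{u_i,u_j\}$.

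For the lattice statement, write $x\in\aff(F)\cap\Z^d$ as $x=v+\alpha u_i+\beta u_j$. Since $u_1,\ldots,u_d$ is a basis of $\Z^d$ and $x-v\in\Z^d$, the vector $x-v$ has integer coordinates in this basis. By uniqueness of coordinates, these coordinates are $\alpha,\beta$ and zeros, so $\alpha,\beta\in\Z$. The reverse inclusion is immediate because $v\in\Z^d$. This step is the two-dimensional analogue of the line argument in the previous lemma.

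I expect the main obstacle to be the tangent-cone step, namely justifying that $P\subseteq v+\operatorname{cone}\{u_1,\ldots,u_d\}$. My first approach is the standard argument: the vertex figure of $P$ at $v$ is a polytope whose vertices correspond to the edges at $v$. Alternatively, one can cite \cite[Chapter~2]{Ziegler1995}, where it is shown that the cone generated by a polytope at a vertex is generated by its edges.
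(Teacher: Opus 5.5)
Your proposal is correct and follows essentially the same route as the paper: it cites the tangent-cone fact from \cite{Ziegler1995}, uses the supporting functional $f=\sum_{k\notin\{i,j\}}\lambda_k$ built from the dual basis of $u_1,\ldots,u_d$ (your $\phi$ with $\phi(u_k)=1$ for $k\notin\{i,j\}$), and finishes with uniqueness of coordinates in the lattice basis. The one difference is that the paper skips your local identification $P\cap U=C\cap U$: since $\phi$ vanishes on $u_i$ and $u_j$, both edges lie in $F$ and already give $\dim F\ge 2$, while $F\subseteq v+\spanR\{u_i,u_j\}$ gives the reverse bound.
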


\begin{proof}
Orient each $u_k$ from $v$ into its incident edge. Since $v$ is smooth,
the vectors $u_1,\ldots,u_d$ form a basis of $\Z^d$. Let
$\lambda_1,\ldots,\lambda_d$ be the corresponding coordinate
functionals; explicitly,
\[
 \lambda_k(u_l)=
 \begin{cases}
  1,&k=l,\\
  0,&k\ne l,
 \end{cases}
 \qquad 1\le k,l\le d.
\]
Thus every $z\in\R^d$ has the unique expression
$z=\sum_{k=1}^{d}\lambda_k(z)u_k$, and each $\lambda_k$ takes integer
values on $\Z^d$.

The cone consisting of the nonnegative multiples of vectors in $P-v$
is generated by the directions of the edges incident to $v$; see
\cite{Ziegler1995}. Hence
\[
 \{\alpha(x-v):\alpha\ge0,\ x\in P\}
 =\left\{\sum_{k=1}^{d}s_k u_k:s_k\ge0\ (1\le k\le d)\right\}.
\]
In particular, if $x\in P$, then every coordinate
$\lambda_k(x-v)$ is nonnegative. Fix distinct indices $i,j$, and define
the linear functional
\[
 f(z)=\sum_{\substack{1\le k\le d\\k\notin\{i,j\}}}\lambda_k(z).
\]
We have $f(x-v)\ge0$ for all $x\in P$ and $f(v-v)=0$. Therefore the
points at which this functional attains its minimum form a face
\[
 F=\{x\in P:f(x-v)=0\}.
\]
For $x\in P$, all summands in $f(x-v)$ are nonnegative. Their sum is
zero precisely when
$\lambda_k(x-v)=0$ for every $k\notin\{i,j\}$. It follows that
\[
 F\subseteq v+\spanR\{u_i,u_j\}.
\]
Each point on the edge from $v$ in direction $u_i$ has the form
$v+s u_i$ with $s\ge0$ and satisfies $f(su_i)=0$. The same argument
applies to the edge in direction $u_j$. Thus both edges lie in $F$.
Their directions are linearly independent, so $F$ has dimension at
least two. The displayed containment gives the reverse bound and
proves
\[
 \dim F=2,
 \qquad \aff(F)=v+\spanR\{u_i,u_j\}.
\]
When $d=2$, the sum defining $f$ is empty and equals zero; in that case
$F=P$, and the same conclusion holds.

Let $z\in\aff(F)\cap\Z^d$. Since $v\in\Z^d$ and
$u_1,\ldots,u_d$ form a lattice basis, there are unique integers
$n_1,\ldots,n_d$ such that
\[
 z-v=\sum_{k=1}^{d}n_k u_k.
\]
On the other hand, the formula for $\aff(F)$ gives real numbers
$s,t$ with $z-v=s u_i+t u_j$. Uniqueness of coordinates in the real
basis $u_1,\ldots,u_d$ implies that $n_k=0$ for
$k\notin\{i,j\}$, $s=n_i$, and $t=n_j$. Therefore
$z\in v+\Z u_i+\Z u_j$. Conversely, for any integers $m,n$, the point
$v+m u_i+n u_j$ is a lattice point in
$v+\spanR\{u_i,u_j\}=\aff(F)$. These two inclusions prove
\[
 \aff(F)\cap\Z^d=v+\Z u_i+\Z u_j.
\]
This completes the proof.
\end{proof}

\begin{proof}[Proof of \cref{thm:obstruction}]
A contradiction follows if an ordered lattice basis makes $P$ $2$-integral. Its first two coordinate functionals will be denoted by $\ell_1,\ell_2$. The primitive incident directions at a smooth vertex $v$ will be denoted by $u_1,\ldots,u_d$. The edge criterion gives $\ell_1(u_i)=\pm1$ for every $i$.

For any distinct $i,j$, the preceding lemma supplies a two-face whose affine lattice is $v+\Z u_i+\Z u_j$. Its projection onto the first two coordinates must be all of $\Z^2$. Since the projection of $v$ is an integer vector, the two projected directions must generate $\Z^2$. The integer matrix with those directions as columns therefore has determinant $1$ or $-1$:
\[
 \ell_1(u_i)\ell_2(u_j)-\ell_1(u_j)\ell_2(u_i)=\pm1.
\]
Reduction modulo two, using $\ell_1(u_i)\equiv1\pmod2$, shows that $\ell_2(u_i)$ and $\ell_2(u_j)$ have opposite parity. This must hold for every pair. Among the three integers $\ell_2(u_1),\ell_2(u_2),\ell_2(u_3)$, however, two have the same parity. The contradiction proves the first assertion for every choice of ordered basis.

When $d\ge4$, the inequality $d-2\ge2$ shows that $(d-2)$-integrality would imply $2$-integrality. Every vertex of a smooth polytope is smooth, so the first assertion proves the remaining conclusion.
\end{proof}

The three-dimensional polytope $R_{1,9}$ fails even $1$-integrality. Three of its edges are
\[
 [(0,0,0),(1,0,0)],\quad
 [(0,0,0),(0,0,1)],\quad
 [(1,0,0),(10,0,1)].
\]
They are obtained by imposing, respectively, the facet equalities $y=z=0$, $x=y=0$, and $y=0$, $x-9z=1$. Their primitive directions are $e_1$, $e_3$, and $9e_1+e_3$. A functional satisfying the edge criterion would have $\ell(e_1),\ell(e_3)\in\{1,-1\}$ and therefore
\[
 \ell(9e_1+e_3)\in\{-10,-8,8,10\},
\]
contrary to that criterion. Together with its negative linear coefficient, this gives a six-facet example failing both properties in dimension three. The higher-dimensional counterexamples of \cref{thm:minimum} also fail the sufficient higher integrality condition by \cref{thm:obstruction}.

Failure of $1$-integrality already occurs for the standard smooth simplex $\Delta_d$ when $d\ge2$. Its primitive edge directions include $e_1$, $e_2$, and $e_2-e_1$. If the first two functional values are each $1$ or $-1$, their difference belongs to $\{-2,0,2\}$, which excludes the required value on the third direction. This simplex is Ehrhart positive and has $d+1$ facets, so even the smallest possible facet number does not force $1$-integrality.

The order of the obstruction is sharp. For the standard cube $[0,1]^d$, every primitive edge direction is a coordinate vector or its negative. The integer functional $\ell(x)=\sum_{i=1}^{d}x_i$ satisfies the edge criterion, and the cube is consequently $1$-integral with respect to a suitable ordered lattice basis. In dimensions at least three, \cref{thm:obstruction} prevents it from being $2$-integral in any basis.






\vspace{5mm}
\noindent
{\small \textbf{Acknowledgments:}}
Feihu Liu was partially supported by the Postdoctoral Fellowship Program and China Postdoctoral Science Foundation (Grant No. BX2026002).


\end{document}